\newtheorem{theorem}{Theorem}[section]
\newtheorem{corollary}[theorem]{Corollary}
\newtheorem{lemma}[theorem]{Lemma}
\newtheorem{proposition}[theorem]{Proposition}
\newtheorem{definition}[theorem]{Definition}
\newtheorem*{theorem*}{Theorem}
\newtheorem*{lemma*}{Lemma}
\newtheorem*{remark*}{Remark}
\newtheorem*{definition*}{Definition}
\newtheorem*{proposition*}{Proposition}
\newtheorem*{corollary*}{Corollary}
\numberwithin{equation}{section}
\newcommand{\real}{\mathbb{R}}
\def\a{\alpha}
\def\e{\varepsilon}        
\def\qed{\,\unskip\kern 6pt \penalty 500
\raise -2pt\hbox{\vrule \vbox to8pt{\hrule width 6pt
\vfill\hrule}\vrule}\par}
\definecolor{darkblue}{rgb}{0.05, .05, .65}
\definecolor{darkgreen}{rgb}{0.1, .65, .1}
\definecolor{darkred}{rgb}{0.8,0,0}
\newcommand{\beqn}{\begin{equation}}
\newcommand{\eeqn}{\end{equation}}
\newcommand{\bear}{\begin{eqnarray}}
\newcommand{\eear}{\end{eqnarray}}
\newcommand{\bean}{\begin{eqnarray*}}
\newcommand{\eean}{\end{eqnarray*}}
\begin{document}

\title{\huge \bf Asymptotic behavior for the heat equation in nonhomogeneous media with critical density}

\author{
\Large Razvan Gabriel Iagar\,\footnote{Dept. de An\'{a}lisis Matem\'{a}tico,
Universitat de Valencia, Dr. Moliner 50, 46100, Burjassot
(Valencia), Spain, \textit{e-mail:} razvan.iagar@uv.es},
\footnote{Institute of Mathematics of the Romanian Academy, P.O. Box
1-764, RO-014700, Bucharest, Romania.}
\\[4pt] \Large Ariel S\'{a}nchez,\footnote{Departamento de Matem\'{a}tica Aplicada,
Universidad Rey Juan Carlos, M\'{o}stoles, 28933, Madrid, Spain,
\textit{e-mail:} ariel.sanchez@urjc.es}\\ [4pt] }
\date{\today}
\maketitle

\begin{abstract}
We study the large-time behavior of solutions to the heat equation
in nonhomogeneous media with critical singular density
$$
|x|^{-2}\partial_{t}u=\Delta u, \quad \hbox{in} \
\real^N\times(0,\infty)
$$
in dimensions $N\geq3$. The asymptotic behavior proves to have some
interesting and quite striking properties. We show that there are
two completely different asymptotic profiles depending on whether
the initial data $u_0$ vanishes at $x=0$ or not. Moreover, in the
former the results are true only for radially symmetric solutions,
and we provide counterexamples to convergence to symmetric profiles
in the general case.
\end{abstract}

\vspace{2.0 cm}

\noindent {\bf AMS Subject Classification 2010:} 35B33, 35B40,
35K05, 35Q79.

\medskip

\noindent {\bf Keywords and phrases:} heat equation, non-homogeneous
media, singular density, asymptotic behavior, radially symmetric
solutions, thermal propagation.

\section{Introduction}

The aim of this work is to establish the asymptotic behavior of solutions to the following heat equation in
nonhomogeneous media with critical density:
\begin{equation}\label{eq1}
|x|^{-2}\partial_{t}u(x,t)=\Delta u(x,t), \quad
(x,t)\in\real^N\times(0,\infty).
\end{equation}
as a part of an ongoing project of studying the asymptotic behavior
for
\begin{equation}\label{eq11}
|x|^{-2}\partial_{t}u(x,t)=\Delta u^m(x,t), \quad
(x,t)\in\real^N\times(0,\infty),
\end{equation}
with $m\geq1$. The technically more involved problem with the
large-time behavior for $m>1$ will be studied in a forthcoming paper
\cite{IS12}.

Equations of type \eqref{eq11} with general densities have been
proposed by Kamin and Rosenau in a series of papers \cite{KR81,
KR82, KR83} to model thermal propagation by radiation in
non-homogeneous plasma. Since then, many papers were devoted to
developing rigorously the qualitative theory or asymptotic behavior
for
\begin{equation}\label{eq.general}
\varrho(x)\partial_{t}u(x,t)=\Delta u^m(x,t), \quad
(x,t)\in\real^N\times(0,\infty),
\end{equation}
usually asking that $\varrho(x)\sim|x|^{-\gamma}$ as $|x|\to\infty$,
for some $\gamma>0$ (e.g. \cite{RV08}). Thus, for $m>1$, it has been
noticed that while $0<\gamma<2$, the solutions have similar
properties to the ones of the pure porous medium equation (for
short, PME)
$$
\partial_{t}u=\Delta u^m,
$$
see \cite{RV06, RV09}, while for $\gamma>2$, the properties of the
solutions depart strongly from the ones of the PME \cite{KRV10}.
Thus, $\gamma=2$ is critical, and the asymptotic behavior for this
case is left open in \cite{KRV10} with a conjecture giving the
explicit profile. On the other hand, the asymptotic behavior for
\eqref{eq.general} with a density $\varrho(x)\sim|x|^{-2}$ at
infinity, but $\varrho$ regular near the origin, is studied in
another recent work \cite{NR}, obtaining an explicit profile solving
\eqref{eq11}, and proving the convergence towards it in the outer
region (that is, outside small compacts near $x=0$). Unusually, the
linear diffusion problem $m=1$ has been studied later than its
nonlinear version, see for example \cite{EKP00, EK05}.

In all cases, it has been noticed that the solutions converge
asymptotically towards profiles coming from the pure power density
equation, that is, $\varrho(x)=|x|^{-\gamma}$. Moreover,
\begin{equation}\label{eq2}
|x|^{-\gamma}\partial_{t}u(x,t)=\Delta u^{m}(x,t)
\end{equation}
has some more interesting feature: a singularity at $x=0$, apart
from the decay at infinity. Recently, in \cite{IRS} the authors
study formally some properties of radial solutions to \eqref{eq2} as
a first step to understand its general behavior. Moreover, a study
of existence and uniqueness for \eqref{eq2} for $\gamma>2$ and $m>1$
is done in \cite[Section 6]{KRV10}.

Coming back to our problem, that of letting $m=1$, $\gamma=2$ in
\eqref{eq2}, we find explicit asymptotic profiles which explain
better the effect of the singularity at $x=0$. The general case
$m>1$ will be treated in the companion paper \cite{IS12}. But in
order to explain these comments and to make precise the motivation
for this work, let us state the main results of the paper.

\medskip

\noindent \textbf{Main results.} We deal with the Cauchy problem
associated to Eq. \eqref{eq1} with initial data
\begin{equation}\label{initdata}
u_0\in L^1_2(\real^N), \quad u_0\geq0,
\end{equation}
where the dimension is $N\geq3$ (except when specified) and
$$
L^1_{2}(\real^N):=\left\{h:\real^N\mapsto\real, \ h \ {\rm
measurable}, \int_{\real^N}|x|^{-2}h(x)\,dx<\infty \right\}.
$$
In Section \ref{sec.wp} we give the precise notions of \emph{weak
solution} and \emph{strong solution} to \eqref{eq1} and we prove
that the Cauchy problem for \eqref{eq1} is well-posed in our
framework. We refer the interested reader to Definition
\ref{def.weak} and Theorem \ref{th.ex} for the precise statements.

We state the results about the large-time behavior, that we find
quite interesting and unexpected. We begin with the case when
$u_0(0)=0$.
\begin{theorem}\label{th.1}
Let $u$ be a \textbf{radially symmetric} solution of Eq. \eqref{eq1}
with initial data satisfying \eqref{initdata} and moreover
\begin{equation}\label{initdata2}
M_{u_0}:=\int_{\real^N}|x|^{-N}u_0(x)\,dx<\infty, \quad u_0(0)=0.
\end{equation}
Then we have
\begin{equation}\label{asympt1}
\lim\limits_{t\to\infty}t^{1/2}\left\|u(x,t)-\frac{M_{u_0}}{\omega_1}F(x,t)\right\|_{\infty}=0,
\end{equation}
where $\omega_1$ is the area of the unit sphere in $\real^N$ and
\begin{equation}\label{profile1}
F(x,t):=\left\{\begin{array}{ll}\frac{1}{\sqrt{4\pi t}}G\left(\frac{\log|x|+(N-2)t}{2\sqrt{t}}\right), \quad
G(\xi)=e^{-\xi^2}, \quad {\rm for} \ |x|\neq0, \\0, \quad {\rm for} \ x=0,\end{array}\right.
\end{equation}
\end{theorem}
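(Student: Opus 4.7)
The plan is to reduce Eq.~\eqref{eq1} in the radial class to the constant-coefficient one-dimensional heat equation on $\real$, and then apply the classical $L^\infty$ large-time theory for the heat equation on the line. Writing $u=u(r,t)$ with $r=|x|$, the equation becomes
\[
u_t = r^2 u_{rr} + (N-1)\,r\, u_r .
\]
First I would set $s=\log r$, so that $r\partial_r=\partial_s$ and $r^2\partial_r^2=\partial_s^2-\partial_s$. Then $\tilde u(s,t):=u(e^s,t)$ satisfies the drift-diffusion equation
\[
\tilde u_t = \tilde u_{ss}+(N-2)\,\tilde u_s,\qquad \tilde u(s,0)=u_0(e^s),
\]
on $\real\times(0,\infty)$, and the Galilean-type shift $y=s+(N-2)t$, i.e.\ $w(y,t):=\tilde u(y-(N-2)t,t)$, removes the drift and yields the one-dimensional heat equation
\[
w_t = w_{yy}\ \text{on}\ \real\times(0,\infty),\qquad w(y,0)=u_0(e^y).
\]

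Next I would translate the hypotheses. Using polar coordinates and $dy=dr/r$,
\[
\int_\real w(y,0)\,dy \;=\; \int_0^\infty u_0(r)\,\frac{dr}{r} \;=\; \frac{1}{\omega_1}\int_{\real^N}|x|^{-N}u_0(x)\,dx \;=\; \frac{M_{u_0}}{\omega_1}\;=:\;\bar M ,
\]
so $w(\cdot,0)\in L^1(\real)$ with mass $\bar M$. The hypothesis $u_0(0)=0$ is precisely what ensures that $u_0(e^y)\to 0$ as $y\to-\infty$, preventing any singular contribution from the origin after the logarithmic substitution.

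I would then invoke the classical $L^\infty$ large-time behavior for the heat equation on the line: for any $f\in L^1(\real)$ with $\int f=\bar M$, the solution of $w_t=w_{yy}$ with datum $f$ satisfies
\[
\lim_{t\to\infty}\,t^{1/2}\bigl\|w(\cdot,t)-\bar M\, G_t\bigr\|_{L^\infty(\real)}=0,\qquad G_t(y):=\frac{1}{\sqrt{4\pi t}}\,e^{-y^2/(4t)} .
\]
Finally I would undo the substitutions. For each $t>0$ the map $r\mapsto y=\log r+(N-2)t$ is a bijection of $(0,\infty)$ onto $\real$, so the supremum is preserved,
\[
\bigl\|u(\cdot,t)-\tfrac{M_{u_0}}{\omega_1}F(\cdot,t)\bigr\|_{L^\infty(\real^N\setminus\{0\})}=\bigl\|w(\cdot,t)-\bar M\, G_t\bigr\|_{L^\infty(\real)},
\]
with $F$ as in \eqref{profile1}. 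At $x=0$ both $F(0,t)$ and $u(0,t)$ vanish (the former by definition, the latter because $L^1$ solutions of the 1D heat equation lie in $C_0(\real)$ for $t>0$, so $w(-\infty,t)=0$), hence the supremum over $\real^N\setminus\{0\}$ agrees with the $L^\infty$ norm on $\real^N$, and \eqref{asympt1} follows.

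The main technical point I expect to need care with is the compatibility of the substitutions with the notion of weak/strong solution introduced in Section~\ref{sec.wp}: one must verify that a radial solution of \eqref{eq1} is regular enough away from $x=0$ for the pointwise chain-rule calculations to be valid, and that $w$ so obtained is the unique $L^1$ solution of the 1D heat equation with datum $u_0(e^{\cdot})$. Once this is established, the theorem reduces to a standard property of the linear heat equation.
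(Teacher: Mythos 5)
Your proposal is correct and takes essentially the same route as the paper: the change of variables $y=\log|x|+(N-2)t$ (which you split into a logarithmic substitution plus a Galilean shift) is exactly the paper's transformation \eqref{tr1}, and the remaining steps --- computing the transformed mass $M_{u_0}/\omega_1$ and invoking the classical $t^{1/2}\,L^\infty$ convergence to the Gaussian for the one-dimensional heat equation --- coincide with the paper's argument. Your closing remarks on the behavior at $x=0$ and on the regularity needed to justify the chain rule are sensible refinements of details the paper leaves implicit.
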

\noindent \textbf{Remarks.} (a) Let us notice also that
$$
\max\{F(x,t): x\neq0\}=O(t^{-1/2}), \quad {\rm as} \ t\to\infty,
$$
showing that the time-scale $t^{1/2}$ is the correct one for the asymptotic behavior in \eqref{asympt1}. More
precisely,
\begin{equation}\label{cex}
\|F(\cdot,t)\|_{\infty}=\frac{1}{\sqrt{4\pi t}},
\end{equation}
as it will be analyzed in the remarks at the end of Section \ref{sec.asympt1}.

\noindent (b) The mass $M_{u_0}$ in \eqref{initdata2} is conserved along the flow. This will be obvious from the
proof of Theorem \ref{th.1}.

\medskip

\noindent \textbf{Counterexamples in the non-radial case.} We
emphasize on the fact that this is true \emph{only for radially
symmetric solutions}. For general solutions, Theorem \ref{th.1} is
not true. It is quite surprising that, due to the singularity at
$x=0$, the solutions that start from $u_0(0)=0$ do not converge
asymptotically to a radial profile.

We construct a counterexample to Theorem \ref{th.1} for solutions
that are not radially symmetric. We pass to generalized spherical
coordinates $x=(r,\phi_1,...,\phi_{N-2},\theta)$ in $\real^N$ and
define the following function
$$
F_{N}(x,t)=F_N(r,\phi_1,...,\phi_{N-2},\theta):=\theta F(r,t).
$$
Using the formula of the Laplace operator in spherical coordinates,
from the particular form of $F_N$ (that involves no dependence on
$(\phi_1,...,\phi_{N-2})$ and $\partial_{\theta}^2F_{N}\equiv0$), we
find that $F_N$ is a solution to \eqref{eq1}, since
$$
\Delta
F_N(x,t)=\theta\Delta_{r}F(r,t)=\theta|x|^{-2}F_t(r,t)=|x|^{-2}\partial_{t}F_{N}(x,t),
$$
where $\Delta_r$ is the Laplacian operator in radial variable. The
fact that Theorem \ref{th.1} is not satisfied on this example is
obvious since \eqref{asympt1} becomes
$$
\lim\limits_{t\to\infty}t^{1/2}\|(\theta-K)F(r,t)\|_{\infty}=0,
$$
for some constant $K$ depending on the dimension $N$, which is false
since $\theta$ is variable, $K$ is constant and $t^{1/2}\|F(\cdot,
t)\|_{\infty}=1/\sqrt{4\pi}$.

\medskip

When the value of the initial data at the origin is nonzero, things
are completely different, as the following theorem states.
\begin{theorem}\label{th.2}
Let $u$ be a \textbf{general} solution of Eq. \eqref{eq1} with initial data $u_0$ satisfying \eqref{initdata}
and $u_0(0)=K>0$. If there exist $\delta$, $\e>0$ such that the initial data $u_0$ satisfies
\begin{equation}\label{initdata1}
|K-u_0(x)|\leq|x|^{\delta}, \quad {\rm as} \ |x|\to0, \quad u_0(x)\leq |x|^{-\e}, \quad {\rm as} \ |x|\to\infty,
\end{equation}
then we have
\begin{equation}\label{asympt0}
\left\|u(t)-\frac{K}{2}E(t)\right\|_{\infty}=O(t^{-1/2}), \quad {\rm
as} \ t\to\infty,
\end{equation}
where
\begin{equation}\label{profile2}
E(x,t):=\left\{\begin{array}{ll}{\rm erfc}\left(\frac{\log|x|+(N-2)t}{2\sqrt{t}}\right), \quad {\rm
erfc}(\xi)=\frac{2}{\sqrt{\pi}}\int_{\xi}^{\infty}e^{-\theta^2}\,d\theta, \quad {\rm for} \ |x|\neq0,\\
K, \quad {\rm for} \ x=0.\end{array}\right.
\end{equation}
\end{theorem}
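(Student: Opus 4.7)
The plan is to reduce Eq. \eqref{eq1} to the standard 1D heat equation on $\real$ in the radial case, and then handle general solutions by a comparison sandwich. Assume first that $u$ is radially symmetric and write $u = u(r,t)$ with $r = |x|$; Eq. \eqref{eq1} becomes $u_t = r^2 u_{rr} + (N-1)\,r\, u_r$. The logarithmic substitution $s = \log r$, $v(s,t) := u(e^s, t)$ produces the constant-coefficient drift-diffusion equation $v_t = v_{ss} + (N-2) v_s$ on $\real$, and the further translation $\xi = s + (N-2)t$, $w(\xi, t) := v(\xi - (N-2)t, t)$ eliminates the drift, yielding the 1D heat equation $w_t = w_{\xi\xi}$ on $\real$ with initial datum $w_0(\xi) = u_0(e^\xi)$.

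In these variables the target profile pulls back to $\frac{K}{2}\,{\rm erfc}(\xi/(2\sqrt{t}))$, which is the explicit solution of $w_t = w_{\xi\xi}$ with step-function initial data $K\,\mathbf{1}_{\{\xi < 0\}}$. Setting $\phi := w - \frac{K}{2}\,{\rm erfc}(\xi/(2\sqrt{t}))$, so that $\phi$ solves the heat equation with $\phi_0(\xi) = u_0(e^\xi) - K\,\mathbf{1}_{\{\xi < 0\}}$, hypothesis \eqref{initdata1} translates into $|\phi_0(\xi)| \leq e^{\delta \xi}$ as $\xi \to -\infty$ and $|\phi_0(\xi)| \leq e^{-\varepsilon \xi}$ as $\xi \to +\infty$, so in particular $\phi_0 \in L^1(\real)$. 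The standard convolution estimate
\[
\|\phi(\cdot, t)\|_{L^\infty(\real)} \leq \|G_t\|_{L^\infty(\real)} \|\phi_0\|_{L^1(\real)} = \frac{\|\phi_0\|_{L^1(\real)}}{\sqrt{4\pi t}}
\]
then produces the rate $O(t^{-1/2})$, which transfers verbatim to \eqref{asympt0} in the $x$-variable because the pull-back $x \mapsto \xi = \log|x| + (N-2)t$ is a bijection onto $\real$ for each fixed $t>0$ and preserves the supremum.

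For general (non-radial) solutions I would use a comparison sandwich. Define the radial envelopes $\overline{u}_0(r) := \sup_{|x|=r} u_0(x)$ and $\underline{u}_0(r) := \inf_{|x|=r} u_0(x)$: both satisfy \eqref{initdata}--\eqref{initdata1} with the same constants $\delta, \varepsilon$ (since $|K-\overline{u}_0(r)| \leq \sup_{|x|=r}|K - u_0(x)| \leq r^\delta$, and analogously at infinity), and both equal $K$ at the origin. By the well-posedness and comparison established in Section \ref{sec.wp} (Theorem \ref{th.ex}), the associated radial solutions satisfy $\underline{u}(x,t) \leq u(x,t) \leq \overline{u}(x,t)$; applying the radial result to each envelope, both $\overline{u}(\cdot, t)$ and $\underline{u}(\cdot, t)$ lie within $O(t^{-1/2})$ of $\frac{K}{2} E(\cdot,t)$ in $L^\infty$, and \eqref{asympt0} follows by squeezing.

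I expect the main obstacle to be the rigorous justification of the change of variables at the level of the weak solutions defined in Section \ref{sec.wp}: one has to verify that the singularity at $x=0$ (together with the pointwise condition $u_0(0) = K$) is correctly captured as the limit $w_0(\xi) \to K$ for $\xi \to -\infty$ on the transformed problem, so that the step appearing in $\phi_0$ is genuinely forced by the initial data rather than imposed as an artificial boundary condition. A secondary technical point is checking that the spherical envelopes $\overline{u}_0,\underline{u}_0$ are admissible radial initial data for the framework in which Theorem \ref{th.ex} applies, so that the comparison chain can be closed. Once these points are settled, the heat-kernel estimates above are entirely standard.
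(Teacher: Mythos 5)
Your proposal is correct and follows essentially the same route as the paper: the logarithmic change of variables reducing radial solutions to the one-dimensional heat equation (the paper's transformation \eqref{tr1} combines your two substitutions into a single step $y=\log|x|+(N-2)t$), the $L^1$ heat-kernel convolution bound on the difference with the step-data solution giving the $O(t^{-1/2})$ rate (the paper's Lemma \ref{lem.Heat}), and the spherical min/max envelope sandwich via the comparison principle of Corollary \ref{cor.comp} for general solutions. The technical points you flag are resolved exactly as you anticipate, through the well-posedness and comparison theory of Section \ref{sec.wp} and by checking that condition \eqref{initdata1} implies the integrability condition \eqref{initdata1bis} used for radial data.
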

\noindent \textbf{Remarks. 1.} Theorem \ref{th.2} can be improved by relaxing and generalizing the condition
\eqref{initdata1}; indeed, \eqref{asympt0} holds true if there exists some functions $\Phi_1,
\Phi_2:(0,\infty)\mapsto(0,\infty)$ such that
$$
\int_{0+}\frac{\Phi_1(r)}{r}\,dr<\infty, \quad \int^{+\infty}\frac{1}{r\Phi_2(r)}\,dr<\infty,
$$
and $$u_0(x)\leq\Phi_1(|x|), \quad \hbox{as} \ x\to0, \quad u_0(x)\leq\Phi_2(|x|), \quad \hbox{as} \
|x|\to\infty.$$ The proof is totally similar.

\noindent \textbf{2. } Theorem \ref{th.2} is indeed true in general, also without radial symmetry, in contrast
with Theorem \ref{th.1}. These two different results show the importance of the singularity of the density
$|x|^{-2}$. Indeed, the evolution gives rise in a striking way to two different asymptotic profiles, only
depending on a difference in the pointwise value of $u_0$ at the origin.

When dealing with radially symmetric solutions, the results can be improved both with respect to the condition
on the initial data $u_0$ and with respect to the rate of convergence. This is gathered in the following
\begin{theorem}\label{th.3}
Let $u$ be a \textbf{radially symmetric} solution for Eq. \eqref{eq1} whose initial data satisfies
\eqref{initdata}.

\noindent (a) The asymptotic convergence \eqref{asympt0} holds true
under the following condition on $u_0$:
\begin{equation}\label{initdata1bis}
I_1:=\int_{B(0,1)}|x|^{-N}|K-u_0(x)|\,dx+\int_{\real^N\setminus B(0,1)}|x|^{-N}|u_0(x)|\,dx<\infty.
\end{equation}

\noindent (b) If we ask furthermore the following condition on the gradient
\begin{equation}\label{initdata3}
I_2:=\int_{\real^N}\frac{|(\log|x|)^3|}{|x|^{N-1}}|\nabla
u_0(x)|\,dx<\infty,
\end{equation}
then we have a rate of convergence:
\begin{equation}\label{asympt2}
\left\|u(t)-\frac{K}{2}E(t)\right\|_{\infty}=O(t^{-3/2}), \quad {\rm
as} \ t\to\infty.
\end{equation}
\end{theorem}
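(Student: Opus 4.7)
The plan is to exploit radial symmetry to reduce equation~\eqref{eq1} to the classical one-dimensional heat equation on $\mathbb{R}$, and then to read off both rates from smoothing estimates of the heat semigroup. For a radial solution $u(r,t)$, the substitution $v(s,t):=u(e^{s},t)$ with $s=\log r$ turns \eqref{eq1} into the constant-coefficient equation $v_{t}=v_{ss}+(N-2)v_{s}$, and the Galilean change $(\xi,\tau):=(s+(N-2)t,\,t)$ then removes the drift and yields $w_{\tau}=w_{\xi\xi}$ on $\mathbb{R}$ for $w(\xi,\tau):=v(\xi-(N-2)\tau,\tau)$. A direct check identifies $\tfrac{K}{2}E$ with $w_{E}(\xi,\tau):=K(1-\Phi_{\tau}(\xi))$, where $\Phi_{\tau}(\xi):=\int_{-\infty}^{\xi}G_{\tau}$ and $G_{\tau}$ is the one-dimensional heat kernel; that is, $w_{E}$ is the heat evolution of the Heaviside-type datum $K\mathbf{1}_{\xi<0}$. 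Consequently $w-w_{E}$ is the heat flow of $h(\eta):=v_{0}(\eta)-K\mathbf{1}_{\eta<0}$, and both parts reduce to $L^{\infty}$-estimates for $h\ast G_{\tau}$.

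For part~(a), writing the two integrals in \eqref{initdata1bis} in spherical coordinates and passing to $s=\log r$ shows that \eqref{initdata1bis} is, up to the factor $\omega_{1}$, equivalent to $h\in L^{1}(\mathbb{R})$. The standard $L^{\infty}$-$L^{1}$ smoothing $\|h\ast G_{\tau}\|_{\infty}\leq\|h\|_{1}/\sqrt{4\pi\tau}$ then yields the rate $O(t^{-1/2})$ announced in \eqref{asympt0}.

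For part~(b), the same change of variables turns \eqref{initdata3} into $\int_{\mathbb{R}}|s|^{3}|v_{0}'(s)|\,ds<\infty$, which in particular forces $v_{0}$ to approach its limits $K$ at $-\infty$ and $0$ at $+\infty$ at the polynomial rate $O(|s|^{-3})$. This decay is strong enough to justify an integration by parts in $\eta$ inside $(h\ast G_{\tau})(\xi)=\int h(\eta)G_{\tau}(\xi-\eta)\,d\eta$ with vanishing boundary terms, giving
$$
(w-w_{E})(\xi,\tau)=\int_{-\infty}^{\infty} v_{0}'(\eta)\,[\Phi_{\tau}(\xi-\eta)-\Phi_{\tau}(\xi)]\,d\eta.
$$
The plan is then to split the $\eta$-integral at $|\eta|\simeq\sqrt{\tau}$. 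On the far-field $|\eta|\gtrsim\sqrt{\tau}$ the crude bound $|\Phi_{\tau}(\xi-\eta)-\Phi_{\tau}(\xi)|\leq 1$ combined with the cubic-weight inequality $\int_{|\eta|>\sqrt{\tau}}|v_{0}'(\eta)|\,d\eta\leq \tau^{-3/2}\int|s|^{3}|v_{0}'(s)|\,ds$ already produces an $O(\tau^{-3/2})$ contribution. On the near-field $|\eta|\lesssim\sqrt{\tau}$ one Taylor-expands $\Phi_{\tau}$ around $\xi$ and trades each extra power of $|\eta|$ in the Taylor coefficients for one factor of $\tau^{-1/2}$ through the standard bound $\|G_{\tau}^{(k)}\|_{\infty}\lesssim\tau^{-(k+1)/2}$; matching the three powers of $|s|$ available in \eqref{initdata3} with three half-powers of $\tau$ then delivers \eqref{asympt2}.

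The main obstacle is the sharp book-keeping between the near-field Taylor remainder, the boundary contributions generated by the integration by parts, and the precise cubic weight in \eqref{initdata3}: all three must be calibrated together so that the total error is $O(\tau^{-3/2})$, and any weakening of the weight would cost one or more powers of $\tau^{-1/2}$ and collapse the estimate back to the coarser rate of part~(a).
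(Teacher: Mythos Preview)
Your treatment of part~(a) is correct and essentially identical to the paper's: both perform the logarithmic substitution followed by the Galilean shift to reach the one-dimensional heat equation, observe that condition~\eqref{initdata1bis} is (up to the factor $\omega_{1}$) precisely the statement that $h:=v_{0}-K\mathbf{1}_{(-\infty,0)}\in L^{1}(\mathbb{R})$, and conclude via the $L^{1}\to L^{\infty}$ smoothing of the heat semigroup.

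For part~(b) the paper takes a different route: it passes to $\psi:=-v_{y}$, rewrites \eqref{initdata3} as $\int_{\mathbb{R}}|y|^{3}|\psi_{0}(y)|\,dy<\infty$, and then invokes a black-box result of Miller and Bernoff (stated as Lemma~\ref{lem.Miller}) which delivers the $O(t^{-3/2})$ bound directly for the primitive $\int_{-\infty}^{y}(\psi-K G)$. Your integration-by-parts identity is correct and is in fact the same primitive written out explicitly, but your near-field argument has a genuine gap. Taylor expanding $\Phi_{\tau}(\xi-\eta)-\Phi_{\tau}(\xi)$ about $\eta=0$ produces, \emph{before} the cubic remainder, the lower-order terms
\[
-\eta\,G_{\tau}(\xi)\;+\;\tfrac{\eta^{2}}{2}\,G_{\tau}'(\xi),
\]
and after integrating against $v_{0}'$ on $|\eta|\lesssim\sqrt{\tau}$ these contribute quantities of size $\big(\int\eta\,v_{0}'\big)\cdot O(\tau^{-1/2})$ and $\big(\int\eta^{2}v_{0}'\big)\cdot O(\tau^{-1})$. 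The cubic weight in \eqref{initdata3} controls only the third-order Taylor \emph{remainder}, not the full difference; it says nothing about these first two moments of $v_{0}'$, which need not vanish (take any $v_{0}$ for which $h$ is smooth, compactly supported, and has nonzero integral). As written, your near-/far-field split therefore only reproduces the $O(\tau^{-1/2})$ rate of part~(a). To reach $O(\tau^{-3/2})$ along these lines you would have to either force additional moment cancellations or appeal to exactly the refined estimate that the paper imports from Miller--Bernoff.
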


We will add here two more interesting consequences about the behavior near $x=0$, where the singularity at $x=0$
plays a key role.
\begin{proposition}\label{prop.3}
\noindent (a) There is no fundamental solution of Eq. \eqref{eq1},
that is, a solution with initial data $u_0=M\delta_0$.

\noindent (b) If the initial condition $u_0$ satisfies $u_0(x)=0$ for any $x\in B(0,r_0)$, for some $r_0>0$,
then $u(x,t)>0$ for all $(x,t)\in(\real^N\setminus\{0\})\times(0,\infty)$ and $u(0,t)=0$ for any $t>0$.
\end{proposition}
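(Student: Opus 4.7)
The plan for both statements is to pass to the logarithmic radial variable $s=\log|x|\in\real$. Under this change, radial solutions of Eq.~\eqref{eq1} satisfy the one-dimensional drift-diffusion equation $v_t=v_{ss}+(N-2)v_s$ on $\real$, while general solutions satisfy $v_t=v_{ss}+(N-2)v_s+\Delta_\omega v$ on $\real\times S^{N-1}$, where $\Delta_\omega$ denotes the Laplace--Beltrami operator on the unit sphere. Both equations are uniformly parabolic, and the origin $x=0$ corresponds to the ``point at infinity'' $s=-\infty$, so pointwise behaviour at $x=0$ translates into tail behaviour as $s\to-\infty$.

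For part (a), my first remark is that $u_0=M\delta_0$ is not admissible in the well-posedness framework of Section~\ref{sec.wp}: since $N\ge 3$, $\int_{\real^N}|x|^{-2}\,M\delta_0=+\infty$ and hence $M\delta_0\notin L^1_2(\real^N)$. To rule out a ``very weak'' fundamental solution obtained as a limit of approximations, I would mollify $M\delta_0$ by nonnegative $u_0^n$ of total mass $M$ and support in $B(0,\ve_n)$ with $\ve_n\to 0$. In log coordinates these $v_0^n$ concentrate near $s_n=\log\ve_n\to-\infty$, and the mass normalisation forces an amplitude $A_n\sim e^{-Ns_n}$. For the idealised case $v_0^n=A_n\delta_{s_n}$ the explicit formula
\begin{equation*}
v^n(s,t)=\frac{A_n}{\sqrt{4\pi t}}\exp\!\left(-\frac{(s-s_n+(N-2)t)^2}{4t}\right)
\end{equation*}
has exponent $-Ns_n-s_n^2/(4t)+O(s_n)$, and the quadratic term $-s_n^2/(4t)$ forces $v^n(s,t)\to 0$ pointwise. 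Hence no nontrivial limit can be extracted, so no fundamental solution exists.

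For part (b), the strict positivity $u(x,t)>0$ on $(\real^N\setminus\{0\})\times(0,\infty)$ follows from the strong parabolic maximum principle applied to $v$ on the connected domain $\real\times S^{N-1}$, where the transformed equation is uniformly parabolic. Since $v_0\ge 0$ and (excluding the trivial case $u_0\equiv 0$) $v_0\not\equiv 0$, we obtain $v(s,\omega,t)>0$ for all $(s,\omega,t)$ with $t>0$, which is exactly $u(x,t)>0$ for $x\ne 0$. For $u(0,t)=0$, the hypothesis $u_0\equiv 0$ on $B(0,r_0)$ translates into $v_0\equiv 0$ on $(-\infty,\log r_0]\times S^{N-1}$, and I would compare $v$ with the explicit radial supersolution $w$ generated by $w_0=\|u_0\|_\infty\chi_{\{|x|\ge r_0\}}$, namely
\begin{equation*}
w(s,t)=\frac{\|u_0\|_\infty}{2}\,{\rm erfc}\!\left(\frac{\log r_0-s-(N-2)t}{2\sqrt{t}}\right).
\end{equation*}
Since $v_0\le w_0$ pointwise, comparison yields $v\le w$, and $w(s,t)\to 0$ as $s\to-\infty$ for each fixed $t>0$. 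Thus $u(x,t)\to 0$ as $|x|\to 0$, and the continuity of $u$ at the origin (inherited from the strong-solution framework) forces $u(0,t)=0$.

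The main obstacle is making part (a) rigorous outside the $L^1_2$ framework; the log-coordinate computation above seems to me the cleanest route. A minor technical point in part (b) is that the barrier uses $\|u_0\|_\infty$; if $u_0\in L^1_2$ is unbounded, a truncation of the initial datum outside $B(0,r_0)$ reduces matters to the bounded case without altering the conclusion.
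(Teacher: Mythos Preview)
Your proposal is correct and reaches the same conclusions as the paper, but the routes differ in a couple of instructive ways.

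For part~(a), the paper argues more tersely: it uses the moving-frame variable $y=\log|x|+(N-2)t$ (rather than your stationary $s=\log|x|$), which kills the drift and yields the pure one-dimensional heat equation. Since $x=0$ corresponds to $y=-\infty$, the putative initial datum $M\delta_0$ is sent to the zero function on~$\real$, and uniqueness for the heat equation forces $v\equiv 0$, a contradiction. Your approximation argument is longer but arguably more honest about what ``solution with initial data $M\delta_0$'' should mean outside the $L^1_2$ framework: you show that any reasonable approximating family collapses to zero because the Gaussian decay $-s_n^2/(4t)$ dominates the amplitude blow-up $e^{-Ns_n}$. Both arguments exploit the same geometric fact---the origin lies at infinity in logarithmic coordinates---and your explicit computation makes the mechanism quantitative.

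For part~(b), the paper first handles the radial case via the one-dimensional heat equation (positivity by the strong maximum principle, and $\lim_{y\to-\infty}v(y,t)=0$ from the heat kernel representation), and then treats a general solution by sandwiching it between the two radially symmetric solutions with data $u_{\pm,0}(r)=\min/\max_{|x|=r}u_0(x)$, invoking the comparison principle from Section~\ref{sec.wp}. Your approach is more direct: you apply the strong maximum principle to the full transformed equation on $\real\times S^{N-1}$ to get strict positivity in one stroke, and then bound above by the explicit $\mathrm{erfc}$ barrier---which is precisely the paper's $u_+$ when one chooses $u_{+,0}=\|u_0\|_\infty\chi_{\{|x|\ge r_0\}}$. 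The paper's sandwich buys a clean reduction to standard one-dimensional heat facts; your route avoids that reduction at the price of invoking the maximum principle on the product domain. Your closing remark about truncating an unbounded $u_0$ is well taken and addresses a point the paper glosses over.
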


We plot the asymptotic profiles in Figure \ref{figure1} in dimension
$N=3$, the general picture being similar. In the pictures we see the
profiles evolving at various times.

\begin{figure}[ht!]
  \begin{center}
  \includegraphics[width=15cm,height=10cm]{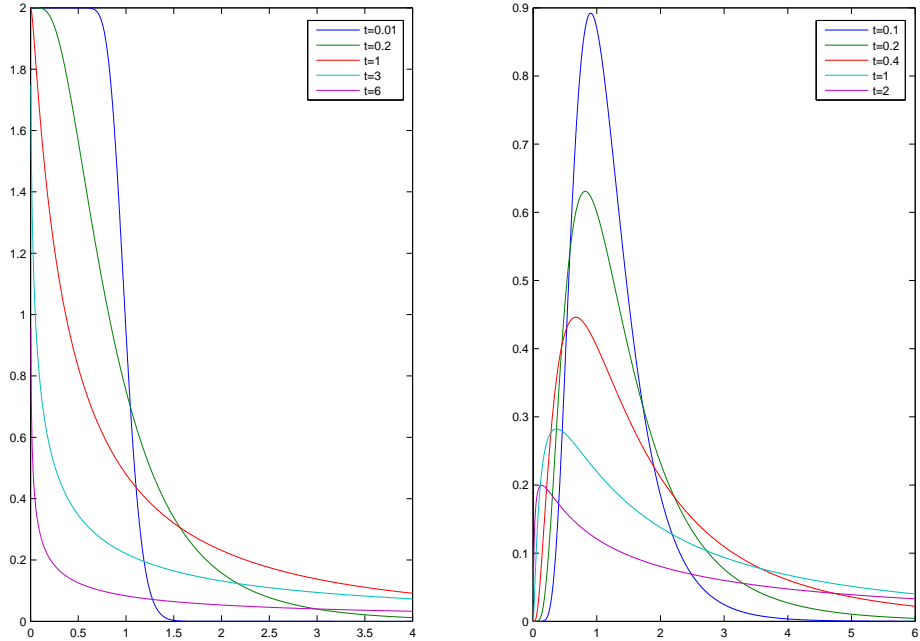}
  \end{center}
  \caption{Profiles $E$ and $F$ for dimension $N=3$ at various times.} \label{figure1}
\end{figure}

\medskip

\noindent \textbf{Organisation of the paper.} Before passing to the
asymptotic behavior, we begin by studying the well-posedness for
\eqref{eq1} with suitable initial data. This is the goal of Section
\ref{sec.wp}, which insures us that the object of our study exists
in suitable spaces. We then prove the main results in two steps. In
a first step, we describe a transformation mapping radially
symmetric solutions of \eqref{eq1} into solutions of the
one-dimensional heat equation. This is done in Section \ref{sec.tr}.
With the aid of it, we prove Theorem \ref{th.1} and part (a) of
Theorem \ref{th.3}. Then, in order to prove part (b) of Theorem
\ref{th.3}, we need one more transformation step. The proofs of the
theorems for radially symmetric solutions and some more remarks
about the asymptotic profiles are the subject of Section
\ref{sec.asympt1}. Finally, in Section \ref{sec.asympt2} we prove
Theorem \ref{th.2}, using as essential tool a comparison principle
proved in Section \ref{sec.wp}, and we end with the proof of
Proposition \ref{prop.3}. We close the paper with a section of
comments where we include a brief discussion of the dimension $N=1$
for \eqref{eq1} and raise some open problems.

\section{Existence and uniqueness}\label{sec.wp}

Before proving the main results about asymptotic behavior, we have
to develop a theory of existence and uniqueness for the Cauchy
problem
\begin{equation}\label{CP1}
\left\{\begin{array}{ll}|x|^{-2}u_t=\Delta u, \quad \hbox{in} \
\real^N\times(0,\infty),\\ u(x,0)=u_0(x), \quad
x\in\real^N,\end{array}\right.
\end{equation}
for suitable initial data $u_0\in L^1_2(\real^N)$. We start from the
precise definition of a solution.
\begin{definition}\label{def.weak}
We say that a function $u(x,t)$ is a \emph{weak solution} to
\eqref{CP1} if it satisfies the following conditions:

\noindent (a) $u\in C([0,\infty);L^1_2(\real^N))\cap
L^{\infty}(\real^N\times(\tau,\infty))$, for any $\tau>0$, and
$u\geq0$ in $\real^N\times(0,\infty)$;

\noindent (b) The integral identity
\begin{equation}\label{weaksol}\int\int
u(\Delta\varphi+|x|^{-2}\varphi_t)\,dx\,dt=0\end{equation} holds
true for any test function $\varphi\in
C_0^{\infty}(\real^N\times(0,\infty))$, and $u(0)=u_0$.
\end{definition}

We have the following:
\begin{theorem}\label{th.ex}
Let $u_0\in L^{1}_{2}(\real^N)$, $u_0$ nonnegative. Then there
exists a unique weak solution $u$ to the Cauchy problem \eqref{CP1}
such that
$$
|x|^{-2}u_t, \ \Delta u\in L^1_{\rm loc}(Q_*), \quad
|x|^{-2}u_t=\Delta u \ a.\,e. \ {\rm in} \ Q_*,
$$
where $Q_*=\real^N\times(0,\infty)\setminus\{(0,t):t>0\}$.
\end{theorem}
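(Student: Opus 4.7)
My strategy is to construct the solution by approximation on domains that exclude a neighborhood of the singular line $\{x=0\}$, and to prove uniqueness by a duality argument. Observe that on $Q_*$ the equation is equivalent to $u_t=|x|^2\Delta u$, a linear equation that is uniformly parabolic on any compact subset of $Q_*$ with smooth coefficients; hence all the technical difficulties are concentrated at $\{x=0\}$ and at spatial infinity, while in the interior of $Q_*$ classical linear parabolic theory applies.

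\noindent \textbf{Existence.} Fix $\varepsilon_n\downarrow 0$ and $R_n\uparrow\infty$ and mollify $u_0$ to produce nonnegative smooth data $u_{0,n}$ supported in the annulus $A_n=B_{R_n}\setminus\bar B_{\varepsilon_n}$ and approximating $u_0$ in $L^1_2(\real^N)$. Classical linear parabolic theory provides a unique smooth nonnegative solution $u_n$ to the homogeneous Dirichlet problem for \eqref{eq1} in $A_n\times(0,\infty)$, which I extend by $0$. I then collect uniform a priori estimates: a weighted $L^1$ bound, obtained by multiplying the equation by $1$ and integrating, so that formally $\tfrac{d}{dt}\int|x|^{-2}u_n\,dx=\int\Delta u_n\,dx$ is nonpositive up to boundary fluxes, which gives $u_n\in L^\infty(0,\infty;L^1_2)$ uniformly in $n$; an $L^\infty$ bound on $[\tau,\infty)$ from the parabolic maximum principle combined with standard smoothing estimates; and, on any compact $K\subset\subset Q_*$, uniform $C^{2+\alpha,1+\alpha/2}$ bounds from the interior Schauder estimates applied to the uniformly parabolic equation $u_t=|x|^2\Delta u$ on a slightly larger compact. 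Arzel\`a--Ascoli and a diagonal procedure extract a subsequence converging locally uniformly on $Q_*$ to a limit $u\in C^\infty(Q_*)$ solving the equation classically on $Q_*$, which provides at once the $L^1_{\rm loc}(Q_*)$ regularity of $|x|^{-2}u_t$ and $\Delta u$ and the pointwise identity. The weak formulation \eqref{weaksol} and continuity at $t=0$ in $L^1_2$ are then inherited from the corresponding facts for $u_n$ via dominated convergence using the uniform weighted $L^1$ estimate.

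\noindent \textbf{Uniqueness and main obstacle.} For any two weak solutions $u_1,u_2$ with the same datum, let $w=u_1-u_2$. Given $\varphi\in C_c^\infty(Q_*)$ supported in $\real^N\times(0,T)$, I solve backward in time the adjoint problem
\begin{equation*}
|x|^{-2}\psi_t+\Delta\psi=-\varphi,\qquad \psi(\cdot,T)=0,
\end{equation*}
on $\real^N\setminus\{0\}$, again by approximation on annuli with homogeneous Dirichlet data, producing $\psi$ with controlled decay at $x=0$ and at infinity. Plugging $\psi$ into \eqref{weaksol} for $w$ yields $\iint w\varphi\,dx\,dt=0$, and the arbitrariness of $\varphi$ gives $w\equiv 0$ on $Q_*$. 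The delicate step, and the main obstacle of the whole argument, is precisely the construction and control of $\psi$ near the singular line: one needs decay estimates on $\psi$ at $x=0$ that both make $\psi$ an admissible test function in Definition \ref{def.weak} and force the boundary terms arising on $\{|x|=\varepsilon\}$ in the integration by parts to vanish as $\varepsilon\to 0$. Equivalently, one must rule out concentration of $w$ on $\{x=0\}\times(0,\infty)$, a phenomenon that is closely tied to, and consistent with, the nonexistence of a fundamental solution established later in Proposition \ref{prop.3}(a).
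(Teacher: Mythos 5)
There are genuine gaps in both halves of your argument. On uniqueness: you propose a duality argument and then correctly single out its critical step --- constructing the backward adjoint solution $\psi$ with enough control near $\{x=0\}$ to make it an admissible test function and to kill the boundary terms on $\{|x|=\varepsilon\}$ --- but you do not carry out that step, so the uniqueness proof is not complete as written. The paper avoids the adjoint problem entirely: it proves an $L^1_2$-contraction principle (Proposition \ref{prop.contraction}) by subtracting the two equations, testing against $p(u_1-u_2)\xi_n$ with $p$ a smooth approximation of $\mathrm{sign}_0^{+}$ and $\xi_n$ a cutoff at infinity only, and integrating by parts so that the error term is $\|\Delta\xi_n\|_\infty\iint_{|x|>n}|u_1-u_2|\,dx\,dt\to0$. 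This requires no decay analysis at $x=0$, and as a by-product yields the comparison principle (Corollary \ref{cor.comp}) used later in Theorem \ref{th.2} and Proposition \ref{prop.3}, as well as the Cauchy property in $C([0,\infty);L^1_2)$ of the approximating sequence, which is exactly what is needed to pass from compactly supported to general data; your appeal to ``dominated convergence'' for the continuity at $t=0$ does not by itself give convergence in $C([0,\infty);L^1_2)$.

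On existence, the uniform $L^\infty$ bound is asserted but not justified: written as $u_t=|x|^2\Delta u$ the equation degenerates at $x=0$, so there is no ``standard smoothing estimate'' from $L^1_2$ to $L^\infty$ available uniformly in $n$, and the maximum principle only gives $\|u_n(t)\|_\infty\le\|u_{0,n}\|_\infty$, which is not uniformly controlled for mollifications of a general $L^1_2$ datum. The paper's key device is comparison with the explicit supersolution: for $u_0\in C_0^\infty(\real^N\setminus\{0\})$ one has $u_0\le KF(\cdot,\tau)$ for suitable $K,\tau$, whence the universal bound $u_{r,R}\le KF(\cdot,t+\tau)$ independent of $r$ and $R$ (estimate \eqref{unif.bound}); without this, your interior Schauder estimates have no uniform starting point and the compactness limit is not justified. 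Note also that the paper does not need classical interior regularity: it derives the local energy estimates $\nabla u\in L^2_{\rm loc}$ and $|x|^{-1}u_t\in L^2_{\rm loc}$ by testing with $u\eta^2$ and $u_t\eta$, which already suffice to show that the weak solution is strong in the sense stated in the theorem.
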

This type of solution is usually referred as a \emph{strong
solution} (see \cite{VazquezPME}). The proof will adapt ideas from
the proof of existence in \cite[Section 6]{KRV10}, where
well-posedness is proved for \eqref{eq2} with $\gamma>2$ and $m>1$,
thus at some points we will be rather sketchy.
\begin{proof}[Proof of Theorem \ref{th.ex}]
\textbf{Uniqueness.} This is based on the following result which is
also interesting by itself.
\begin{proposition}[$L^{1}_2$-Contraction principle]\label{prop.contraction}
Let $u_1$, $u_2$ be two strong solutions of Eq. \eqref{eq1}. For
$0<t_1<t_2$ we have
\begin{equation}\label{contr}
\int_{\real^N}|x|^{-2}\left[u_1(x,t_2)-u_2(x,t_2)\right]_{+}\,dx\leq\int_{\real^N}|x|^{-2}\left[u_1(x,t_1)-u_2(x,t_1)\right]_{+}\,dx,
\end{equation}
where $[g]_{+}$ represents the positive part.
\end{proposition}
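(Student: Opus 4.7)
The plan is to use Kato's trick: setting $w = u_1 - u_2$, by linearity $w$ is itself a strong solution of $|x|^{-2} w_t = \Delta w$ in $Q_*$, and I would derive the monotonicity of $t \mapsto \int |x|^{-2} w_+(x,t) \, dx$ by testing the equation against a smooth convex approximation of the positive part, combined with a careful spatial cutoff. Concretely, pick smooth non-decreasing $j_\eta : \real \to \real$ with $j_\eta$ convex, $0 \leq j_\eta' \leq 1$, $j_\eta(s) \to s_+$ and $j_\eta'(s) \to \mathrm{sign}_+(s)$ as $\eta \to 0$, together with a radial cutoff $\zeta_{\e, R}$ supported in $\{\e < |x| < 2R\}$, identically $1$ on $\{2\e < |x| < R\}$, satisfying $|\Delta \zeta_{\e, R}| \leq C/\e^2$ on the inner transition annulus and $|\Delta \zeta_{\e, R}| \leq C/R^2$ on the outer one.

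Multiplying the equation by $j_\eta'(w)\, \zeta_{\e, R}$, integrating over $\real^N \times (t_1, t_2)$, and integrating by parts in space (the support of $\zeta_{\e, R}$ lies in $Q_*$, where the equation holds a.e., so no boundary term at $x=0$ appears), I obtain
\begin{equation*}
\int_{\real^N} |x|^{-2} \bigl[j_\eta(w(t_2)) - j_\eta(w(t_1))\bigr] \zeta_{\e, R} \, dx \leq \int_{t_1}^{t_2} \int_{\real^N} j_\eta(w)\, \Delta \zeta_{\e, R} \, dx \, dt,
\end{equation*}
after dropping the non-positive convexity term $-\iint j_\eta''(w) |\nabla w|^2 \zeta_{\e, R}$. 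Sending $\eta \to 0$ by dominated convergence (using the uniform $L^\infty$ bound on $w$ over $[t_1, t_2]$ granted by Definition \ref{def.weak}(a)) replaces $j_\eta(w)$ by $w_+$ throughout.

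The main technical point is to show that the right-hand side vanishes as $\e \to 0$ and $R \to \infty$. Here the assumption $N \geq 3$ and the singular weight $|x|^{-2}$ play complementary roles at the two scales. Near the origin, $|\Delta \zeta_{\e, R}| \leq C \e^{-2}$ on a set of volume $\sim \e^N$, so the uniform $L^\infty$ bound on $w_+$ yields a contribution of order $\e^{N-2}$, which vanishes precisely because $N \geq 3$. Near infinity, I would write $w_+ = |x|^2 \cdot |x|^{-2} w_+$ and use $|x|^2 \leq 4 R^2$ on the outer annulus to bound the contribution by a constant multiple of $\int_{t_1}^{t_2} \int_{|x| \geq R} |x|^{-2} w_+ \, dx \, dt$, a tail of the $L^1_2$ norm that vanishes by absolute continuity of the integral, since the continuity $u_1, u_2 \in C([0,\infty); L^1_2(\real^N))$ keeps $\int |x|^{-2} w_+(\cdot, t) \, dx$ uniformly bounded on $[t_1, t_2]$. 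Passing to the limit in both cutoffs then yields \eqref{contr}.
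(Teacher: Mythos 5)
Your proof is correct and follows essentially the same route as the paper: test the equation for $w=u_1-u_2$ against a smooth approximation of $\mathrm{sign}_0^{+}$ times a cutoff, integrate by parts, drop the nonnegative convexity term, and kill the cutoff error at infinity by recognizing $R^{-2}\int_{R<|x|<2R}|w|\,dx$ as (a constant times) a tail of the $L^1_2$ norm. The only difference is your additional inner cutoff near the origin (disposed of via the uniform $L^\infty$ bound and $N\geq3$), where the paper instead integrates by parts up to $x=0$ after an approximation of $u_1-u_2$ borrowed from V\'azquez's book; your variant is, if anything, slightly more careful on that point.
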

\begin{proof}
We follow the same ideas as in \cite[Proposition 9.1]{VazquezPME}.
Let $p\in C^1(\real)\cap L^{\infty}(\real)$ be such that $p(s)=0$
for $s\leq0$, $p'(s)>0$ for $s>0$ and $0\leq p\leq1$. Let $j$ be the
primitive of $p$ such that $j(0)=0$. The idea is to choose $p$ as an
approximation of the function
\begin{equation*}
\hbox{sign}_0^{+}(r)=\left\{\begin{array}{ll}1, \quad \hbox{if} \
r>0,
\\ 0, \quad \hbox{if} \ r\leq0,\end{array}\right.
\end{equation*}
thus $j$ will approximate the positive part function. Consider also
a cutoff function $\xi_n$ constructed in the following way: let
$\xi_0\in C_{0}^{\infty}(\real^N)$, $0\leq\xi_0\leq1$, $\xi_0(x)=0$
for $|x|\geq2$, $\xi_0(x)=1$ for $|x|\leq1$, and define
$\xi_n(x):=\xi_0(x/n)$. Subtracting the equations satisfied by $u_2$
and $u_1$, then multiplying by the test function $p(u_1-u_2)\xi_n$,
we obtain
\begin{equation}\label{interm3}
\int_{t_1}^{t_2}\int_{\real^N}|x|^{-2}(u_1-u_2)_{t}p(u_1-u_2)\xi_n\,dx\,dt=\int_{t_1}^{t_2}\int_{\real^N}\Delta(u_1-u_2)p(u_1-u_2)\xi_n\,dx\,dt.
\end{equation}
By approximation of $u_1-u_2$ as in the proof of \cite[Proposition
9.1]{VazquezPME} and integration by parts in the right-hand side of
\eqref{interm3}, we obtain, with $\xi=\xi_n$:
\begin{equation*}
\begin{split}
\int_{t_1}^{t_2}\int_{\real^N}|x|^{-2}(u_1-u_2)_tp(u_1-u_2)\xi\,dx\,dt&\leq-\int_{t_1}^{t_2}\int_{\real^N}p(u_1-u_2)\nabla(u_1-u_2)\cdot\nabla\xi\,dx\,dt\\
&=-\int_{t_1}^{t_2}\int_{\real^N}\nabla
j(u_1-u_2)\cdot\nabla\xi\,dx\,dt\\&=\int_{t_1}^{t_2}\int_{\real^N}j(u_1-u_2)\Delta\xi\,dx\,dt\leq\int_{t_1}^{t_2}\int_{\real^N}|u_1-u_2||\Delta\xi|\,dx\,dt.
\end{split}
\end{equation*}
We let now $p\to\hbox{sign}_0^{+}$ and integrate to get
\begin{equation*}
\begin{split}
\int_{\real^N}|x|^{-2}\left[u_1(x,t_2)-u_2(x,t_2)\right]_{+}\xi_n\,dx&\leq\int_{\real^N}|x|^{-2}\left[u_1(x,t_1)-u_2(x,t_1)\right]_{+}\xi_n\,dx\\&+
\|\Delta\xi_{n}\|_{\infty}\int_{t_1}^{t_2}\int_{|x|>n}|u_1-u_2|\,dx\,dt.
\end{split}
\end{equation*}
Letting now $n\to\infty$ and taking into account that we deal with
solutions in $L^1$, we obtain the result.
\end{proof}
The uniqueness follows obviously from the contraction principle.
Moreover, we have the following:
\begin{corollary}[Comparison principle]\label{cor.comp}
If $u_1$, $u_2$ are solutions of Eq. \eqref{eq1} such that their
initial data satisfy $u_{0,1}\leq u_{0,2}$, then $u_1\leq u_2$ in
$\real^N\times(0,\infty)$.
\end{corollary}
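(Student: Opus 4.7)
The plan is to read off the comparison principle directly from the $L^1_2$-contraction estimate \eqref{contr} just established. I would apply Proposition~\ref{prop.contraction} to the ordered pair $(u_1,u_2)$ at arbitrary times $0<t_1<t_2$ and then let $t_1\to 0^+$. The $C([0,\infty);L^1_2(\real^N))$-continuity built into Definition~\ref{def.weak}(a), combined with the $1$-Lipschitz nature of the map $s\mapsto [s]_+$, ensures that
\[
\int_{\real^N}|x|^{-2}[u_1(x,t_1)-u_2(x,t_1)]_+\,dx \ \longrightarrow\ \int_{\real^N}|x|^{-2}[u_{0,1}(x)-u_{0,2}(x)]_+\,dx \;=\; 0,
\]
the last equality because the hypothesis $u_{0,1}\leq u_{0,2}$ makes the integrand vanish pointwise.

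Consequently the left-hand side of \eqref{contr} is zero for every $t_2>0$. Since the weight $|x|^{-2}$ is strictly positive on $\real^N\setminus\{0\}$, this forces $[u_1(\cdot,t_2)-u_2(\cdot,t_2)]_+=0$ almost everywhere, i.e.\ $u_1\leq u_2$ a.e.\ in $\real^N\times(0,\infty)$. To promote this a.e.\ inequality to the pointwise statement claimed in the corollary, I would invoke standard parabolic regularity on the set $Q_*=\real^N\times(0,\infty)\setminus\{(0,t):t>0\}$: there \eqref{eq1} is a classical uniformly parabolic equation with smooth coefficient $|x|^{-2}$, so strong solutions are continuous on $Q_*$ and the a.e.\ comparison becomes pointwise off the singular axis, which is all the statement requires.

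I do not anticipate any substantial obstacle, since the whole argument is a clean consequence of Proposition~\ref{prop.contraction}. The one point that requires a brief justification is the limit $t_1\to 0^+$ inside the positive-part integral, and this is delivered automatically by the $L^1_2$-continuity of $t\mapsto u_i(\cdot,t)$ baked into the notion of weak solution in Definition~\ref{def.weak}. This is precisely why the monotone comparison appears here as an immediate corollary rather than as a theorem with its own independent proof.
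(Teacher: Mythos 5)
Your argument is exactly the paper's: the authors also deduce the comparison principle by applying the $L^1_2$-contraction estimate \eqref{contr} down to the initial time, where the positive part of $u_{0,1}-u_{0,2}$ vanishes, forcing the weighted integral of $[u_1-u_2]_+$ to be zero for all later times. Your added remarks on the $t_1\to0^+$ limit via $C([0,\infty);L^1_2)$-continuity and on upgrading the a.e.\ inequality to a pointwise one on $Q_*$ only make explicit what the paper leaves implicit.
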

\begin{proof}[Proof of Corollary \ref{cor.comp}]
Suppose that $u_{0,1}$, $u_{0,2}\in L^1_2(\real^N)$ satisfy
$u_{0,1}\leq u_{0,2}$ and $u_1$, $u_2$ are the corresponding
solutions. Then, for any $t>0$, we have
\begin{equation*}
\int_{\real^N}|x|^{-2}\left[u_1(x,t)-u_2(x,t)\right]_{+}\,dx\leq\int_{\real^N}|x|^{-2}\left[u_{0,1}(x)-u_{0,2}(x)\right]_{+}\,dx=0,
\end{equation*}
hence $u_1(x,t)\leq u_2(x,t)$.
\end{proof}

\medskip

\noindent \textbf{Existence.} This part of the proof is more
involved and will be divided, for a better comprehension, into
several steps.

\medskip

\noindent \textbf{Step 1. Compactly supported data.} In a first
step, we prove existence of a strong solution for compactly
supported initial data outside the origin. For $0<r<R<\infty$, we
denote an annulus by $B_{r,R}:=B_R\setminus B_{r}$. Given $u_0\in
C_{0}^{\infty}(\real^N\setminus\{0\})$, consider the approximating
Dirichlet problem
\begin{equation}\label{DP1}
\left\{\begin{array}{ll}|x|^{-2}u_t=\Delta u, \quad \hbox{in} \
Q_{r,R}:=B_{r,R}\times(0,\infty),\\ u(x,0)=u_0(x), \quad \hbox{in} \
B_{r,R},\\ u(x,t)=0, \quad \hbox{in} \ \partial
B_{r,R}\times(0,\infty).\end{array}\right.
\end{equation}
The problem \eqref{DP1}, being nor singular at $x=0$ neither
degenerate at infinity, admits a unique solution $u_{r,R}$ as shown
in \cite{Ei90, EK94}. Moreover, by simple comparison in the smaller
annulus, it is easily seen that $u_{r_2,R_2}\geq u_{r_1,R_1}$ if
$r_2\leq r_1$ and $R_2\geq R_1$. It remains to show that they are
uniformly bounded. To this end, recall that $u_0\in
C_{0}^{\infty}(\real^N\setminus\{0\})$ and consider $r>0$ small
enough and $R>0$ large enough such that $\hbox{supp}\,u_0\subset
B_{r,R}$. Recalling the definition of $F(x,t)$ given in
\eqref{profile1}, for some $\tau>0$ sufficiently small and $K>0$
large, we have
\begin{equation*}
u_0(x)\leq KF(x,\tau) \quad \hbox{in} \ B_{r,R}.
\end{equation*}
Since on the lateral boundary $u_{r,R}(x,t)=0\leq KF(x,t+\tau)$ for
any $x$ such that either $|x|=r$ or $|x|=R$, by standard comparison
we obtain the following important universal bound
\begin{equation}\label{unif.bound}
u_{r,R}(x,t)\leq KF(x,t+\tau), \quad \hbox{in} \
B_{r,R}\times(0,\infty).
\end{equation}
This is an uniform bound which does not depend on $r$, $R$. Thus,
the following limit
\begin{equation}\label{sol}
u(x,t)=\lim\limits_{r\to0}\lim\limits_{R\to\infty}u_{r,R}(x,t),
\end{equation}
is well-defined and gives rise to a weak solution to \eqref{eq1}, as
it is easy to check with the definition. In this way, we have solved
the problem for $u_0\in C_0^{\infty}(\real^N\setminus\{0\})$.
Moreover, let us notice here that any solution obtained through this
approximation process also satisfies the universal bound
\eqref{unif.bound} in $Q_*$.

\medskip

\noindent \textbf{Step 2. Preliminary estimates.} We still want to
prove that our weak solutions are in fact strong, then pass to the
general case by some approximation process. To this end, we
establish two estimates that will imply further regularity on the
solutions. They are gathered in the following
\begin{lemma}\label{lem.estimate}
Let $u$ be a weak solution to \eqref{eq1} such that $u_0\in
C_0^{\infty}(\real^N\setminus\{0\})$. Let
$K\subset\real^N\setminus\{0\}$ be a compact set and
$0<\tau<T<\infty$. Then there exists a positive constant $C>0$
depending only on $N$, ${\rm dist}(\{0\},K)$ and $\tau$ such that
\begin{equation}\label{est1}
\int_{\tau}^{T}\int_{K}|\nabla u|^2\,dx\,dt\leq C
\end{equation}
and
\begin{equation}\label{est2}
\int_{\tau}^{T}\int_{K}|x|^{-2}|u_t|^2\,dx\,dt\leq C.
\end{equation}
\end{lemma}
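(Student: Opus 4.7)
The plan is to derive both estimates by the standard energy method: multiply the equation by a well-chosen test function, integrate over $\RR^N \times (\sigma, T)$, integrate by parts in space, and absorb the awkward terms using Young's inequality. Throughout I fix a cutoff $\phi \in C_0^\infty(\RR^N \setminus \{0\})$ with $0 \le \phi \le 1$, $\phi \equiv 1$ on $K$, and $\text{supp}(\phi) \subset B_{r_1, R_1}$ for some $0 < r_1 < R_1 < \infty$ containing $K$ uniformly away from the origin. To justify the computations I would work first with the classical approximants $u_{r,R}$ (taking $r<r_1$, $R>R_1$ so that the test function is admissible in \eqref{DP1}), derive the estimates with constants independent of $r,R$, and then pass to the limit using \eqref{sol} together with Fatou's lemma. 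The universal pointwise bound \eqref{unif.bound} plays the role of the master $L^\infty$ control throughout.

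For \eqref{est1}, I multiply $|x|^{-2} u_t = \Delta u$ by $u\phi^2$ and integrate over $(\tau,T) \times \RR^N$. The left-hand side is $\tfrac{1}{2}\bigl[\int |x|^{-2} u^2 \phi^2\,dx\bigr]_\tau^T$; on the right, integration by parts (boundary-free, since $\phi$ is compactly supported away from $0$) yields $-\iint |\nabla u|^2 \phi^2 - 2\iint u\phi\,\nabla u\cdot\nabla\phi$, and Young's inequality absorbs half the gradient term. The net estimate
\[
\tfrac{1}{2}\int_\tau^T\!\!\int |\nabla u|^2 \phi^2\,dx\,dt \;\leq\; \tfrac{1}{2}\int |x|^{-2} u(\tau)^2 \phi^2\,dx + 2\int_\tau^T\!\!\int u^2 |\nabla \phi|^2\,dx\,dt
\]
has both right-hand terms finite, since \eqref{unif.bound} together with the compactness of $\text{supp}(\phi)$ away from the origin makes $u$ bounded there by $K\sup F(\cdot,\tau+\tau_0)$.

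For \eqref{est2} I multiply instead by $u_t \phi^2$. Integration by parts converts the right-hand side into $-\tfrac{1}{2}\tfrac{d}{dt}\int |\nabla u|^2 \phi^2 - 2\int u_t \phi\,\nabla u \cdot \nabla \phi$, and applying Young's inequality with the balanced weights $|x|^{-1}$ and $|x|$ to the cross term lets me absorb half of $\int |x|^{-2} u_t^2 \phi^2$ at the price of introducing the bounded factor $|x|^2 |\nabla \phi|^2$. Integrating from some $\sigma \in (\tau/2, \tau)$ to $T$ and dropping the nonnegative final-time term gives
\[
\tfrac{1}{2}\int_\sigma^T\!\!\int |x|^{-2} u_t^2 \phi^2\,dx\,dt \;\leq\; \tfrac{1}{2}\int |\nabla u(\sigma)|^2 \phi^2\,dx + 2\int_\sigma^T\!\!\int |x|^2 |\nabla u|^2 |\nabla \phi|^2\,dx\,dt.
\]

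The main obstacle — and the reason the proof needs \eqref{est1} as a prerequisite — is making sense of $\int |\nabla u(\sigma)|^2 \phi^2 \, dx$: we have no pointwise-in-time control on $\|\nabla u\|_{L^2}$. The fix is to apply \eqref{est1} once with a slightly fatter cutoff $\tilde\phi \equiv 1$ on $\text{supp}(\phi)$ over the short interval $(\tau/2,\tau)$; by the mean-value theorem, some $\sigma \in (\tau/2, \tau)$ satisfies $\int |\nabla u(\sigma)|^2 \tilde\phi^2\,dx < \infty$, controlling the first term. The final space-time integral on the right is bounded by $\sup_{\text{supp}(\nabla\phi)} |x|^2 \cdot \|\nabla \phi\|_\infty^2$ times $\int_\sigma^T\!\int |\nabla u|^2 \tilde\phi^2$, finite by \eqref{est1} again. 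All constants depend only on $N$, $\text{dist}(\{0\},K)$ and $\tau$, as required.
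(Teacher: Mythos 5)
Your proof is correct and follows essentially the same energy-method route as the paper: test with $u\phi^2$ for \eqref{est1} and with $u_t$ times a cutoff for \eqref{est2}, using the universal bound \eqref{unif.bound} to control the right-hand sides. In fact you are more careful than the paper on two points: you retain and absorb the cross term $\int u_t\phi\,\nabla u\cdot\nabla\phi$ via a weighted Young inequality (the paper's displayed identity for the second estimate silently drops this term), and you justify the finiteness of $\int|\nabla u(\sigma)|^2\phi^2\,dx$ by a mean-value selection of a good time $\sigma\in(\tau/2,\tau)$ rather than asserting it directly from the space-time bound \eqref{est1}.
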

\begin{proof}
The proof adapts the ones in \cite[Theorem 3.6, Theorem 3.8]{KRV10},
but we give a complete version of it for the sake of completeness
and simplicity. Let $\overline{K}$ be a compact neighborhood of $K$
such that $0$ does not belong to $\overline{K}$ and consider a
cutoff function $\eta\in C_{0}^{\infty}(\real^N)$ such that
$$
\eta\equiv1 \ \hbox{in} \ K, \ \eta\equiv0 \ \hbox{in} \
\real^N\setminus\overline{K}, \quad 0\leq\eta\leq1 \ \hbox{in} \
\real^N.
$$
We multiply \eqref{eq1} by $u\eta^2$ and integrate by parts to get
\begin{equation*}
\begin{split}
\frac{d}{dt}\int_{\overline{K}}|x|^{-2}\frac{u^2}{2}\eta^2\,dx&=\int_{\overline{K}}u\Delta
u\eta^2\,dx=-\int_{\overline{K}}\nabla u\cdot\nabla(u\eta^2)\,dx\\
&=-\int_{\overline{K}}|\nabla
u|^2\eta^2\,dx-2\int_{\overline{K}}u\eta\nabla u\cdot\nabla\eta\,dx.
\end{split}
\end{equation*}
Integrating on $[\tau,T]$ and using Young's inequality, we further
obtain
\begin{equation*}
\begin{split}
\frac{1}{2}\int_{\overline{K}}|x|^{-2}u^{2}(x,T)\eta^2\,dx&+\int_{\tau}^{T}\int_{\overline{K}}|\nabla
u|^2\eta^2\,dx\,dt=\frac{1}{2}\int_{\overline{K}}|x|^{-2}u^{2}(x,\tau)\eta^2\,dx-2\int_{\tau}^{T}\int_{\overline{K}}u\eta\nabla
u\cdot\nabla\eta\,dx\,dt\\&\leq\frac{1}{2}\int_{\overline{K}}|x|^{-2}u^{2}(x,\tau)\eta^2\,dx+\frac{1}{2}\int_{\tau}^{T}\int_{\overline{K}}|\nabla
u|^2\eta^2\,dx\,dt+2\int_{\tau}^{T}\int_{\overline{K}}u^2|\nabla\eta|^2\,dx\,dt,
\end{split}
\end{equation*}
whence estimate \eqref{est1} follows from the uniform bound
\eqref{unif.bound}, which implies readily an uniform bound for the
terms in the right-hand side. Consequently,
$$
\int_{\tau}^{T}\int_{K}|\nabla
u|^2\,dx\,dt\leq\int_{\tau}^{T}\int_{\overline{K}}|\nabla
u|^2\eta^2\,dx\,dt\leq C$$ as desired.

In order to prove the second estimate, we multiply \eqref{eq1} this
time by $u_{t}\eta$, then integrate in space and time. We obtain
\begin{equation*}
\int_{\overline{K}}|x|^{-2}|u_t|^2\eta\,dx=-\int_{\overline{K}}\nabla
u\cdot\nabla(u_{t}\eta)\,dx=-\frac{1}{2}\frac{d}{dt}\int_{\overline{K}}|\nabla
u|^2\eta\,dx,
\end{equation*}
whence
\begin{equation*}
\begin{split}
\int_{\tau}^{T}\int_{K}|x|^{-2}|u_t|^2\,dx\,dt&\leq\int_{\tau}^{T}\int_{\overline{K}}|x|^{-2}|u_t|^2\eta\,dx\,dt\\
&=\frac{1}{2}\int_{\overline{K}}|\nabla
u(x,\tau)|^2\eta\,dx-\frac{1}{2}\int_{\overline{K}}|\nabla
u(x,T)|^2\eta\,dx\leq C,
\end{split}
\end{equation*}
the last step coming from the first estimate \eqref{est1}.
\end{proof}
In particular, we deduce from Lemma \ref{lem.estimate} that
$|x|^{-1}u_t=|x|\Delta u\in L^2_{{\rm
loc}}(\real^N\times(0,\infty))$, which at its turn shows that
$|x|^{-2}u_t$, $\Delta u\in L^1_{{\rm loc}}(Q_*)$. Hence $u$ is a
strong solution.

\medskip

\noindent \textbf{Step 3. General data.} For general data $u_0\in
L^1_2(\real^N)$, we use the contraction principle \eqref{contr}. Let
$u_{0,n}$ be a sequence of functions in
$C_0^{\infty}(\real^N\setminus\{0\})$ such that $u_{0,n}\to u_{0}$
in $L_2^1(\real^N)$ and $u_n$ the corresponding solution with
initial datum $u_{0,n}$. By \eqref{contr}, the sequence $u_{n}$ is
uniformly Cauchy in $C([0,\infty);L^1_2(\real^N))$, hence it
converges to a limit $u\in C([0,\infty);L^1_2(\real^N))$ which is a
weak solution (passing to the limit in the identity \eqref{weaksol}
is immediate). Moreover, we deduce from \eqref{unif.bound} that $u$
is uniformly bounded on compact subsets of $Q_*$. On the other hand,
we apply Lemma \ref{lem.estimate} for $u_n$ and we obtain the
estimates \eqref{est1}, \eqref{est2} locally uniformly with respect
to $n$ in $Q_*$. It follows first than the limit $u$ satisfies the
bound \eqref{unif.bound}, then \eqref{est1}, and finally, by redoing
for $u$ the last step in the proof of Lemma \ref{lem.estimate}, we
deduce that $u$ also satisfies \eqref{est2}. Hence $u$ is a strong
solution.
\end{proof}

\section{Radially symmetric solutions. The
transformations}\label{sec.tr}

We restrict ourselves first to radially symmetric solutions
$u(r,t)=u(|x|,t)$, $r=|x|$, of Eq. \eqref{eq1}. In this case, we
introduce the following change of variable
\begin{equation}\label{tr1}
u(|x|,t)=v(y,t), \quad y=\log|x|+(N-2)t.
\end{equation}
By a simple calculation, we notice that
$$
u_t(r,t)=(N-2)v_{y}(y,t)+v_t(y,t), \quad
u_r(r,t)=\frac{1}{r}v_{y}(y,t), \quad
u_{rr}(r,t)=\frac{1}{r^2}(v_{yy}(y,t)-v_{y}(y,t)),
$$
hence, by replacing in \eqref{eq1}, we obtain that $v$ is a solution
to the Cauchy problem for the one-dimensional heat equation
\begin{equation}\label{1DHeat}
v_{t}(y,t)=v_{yy}(y,t), \quad v_0(y)=u_0(r).
\end{equation}
for $(y,t)\in\real\times(0,\infty)$. We observe that $x=0$ is mapped
through \eqref{tr1} into $y\to-\infty$. We thus can use the previous
transformation \eqref{tr1} in order to prove Theorems \ref{th.1} and
\ref{th.2}. But before doing that, let us also notice that, in
dimensions $N\geq3$ and also $N=1$, there exists another
transformation, appearing in a more general version in
\cite[Subsection 1.3.4]{Pol},
\begin{equation}\label{tr2}
u(r,t)=e^{z-t}w(z,t), \quad z=-\frac{N-2}{2}\log|x|,
\end{equation}
again mapping radially symmetric solutions $u(r,t)$ of Eq.
\eqref{eq1} into solutions $w(z,t)$ of \eqref{1DHeat}, but with a
different connection between initial data:
$$
u_0(|x|)=e^{z}w_0(z)=|x|^{-(N-2)/2}w_0(z).
$$
The change of variable and function \eqref{tr2} acts as an inversion for $N\geq3$: it maps $x=0$ to
$y\to+\infty$ and $|x|\to+\infty$ to $y\to-\infty$, and it acts as a direct mapping only for $N=1$. In the
sequel, we will use mainly transformation \eqref{tr1} due to its simplicity, but at some points \eqref{tr2} will
appear too.

\section{Radially symmetric solutions. Asymptotic
convergence}\label{sec.asympt1}

In this section we prove Theorems \ref{th.1} and \ref{th.2} for
radially symmetric solutions and we deduce some more interesting
remarks about the quite unexpected asymptotic behavior for Eq.
\eqref{eq1}.

\begin{proof}[Proof of Theorem \ref{th.1}] Let $u$ be a solution for
\eqref{eq1} with initial datum satisfying \eqref{initdata} and
\eqref{initdata2}. By transformation \eqref{tr1}, we arrive to a
solution $v$ of \eqref{1DHeat} with initial datum $v_0$ such that
$$\lim\limits_{y\to\infty}v_0(y)=\lim\limits_{y\to-\infty}v_0(y)=0$$ and
\begin{equation}\label{mass}
\begin{split}
M_{v_0}:&=\int_{-\infty}^{\infty}v_0(y)\,dy=\int_{-\infty}^{\infty}u_0(e^y)\,dy=\int_0^{\infty}\frac{u_0(r)}{r}\,dr\\
&=\int_0^{\infty}r^{-N}u_0(r)r^{N-1}\,dr=\frac{1}{\omega_1}\int_{\real^N}|x|^{-N}u_0(|x|)\,dx=\frac{M_{u_0}}{\omega_1}<\infty,
\end{split}
\end{equation}
where, as usual, $\omega_1$ is the area of the unit sphere of $\real^N$. In fact, we notice that the same
equality \eqref{mass} holds true taken at any time $t>0$ instead of $t=0$. From the standard mass conservation
property for the heat equation, we deduce that the quantity $\int_{\real^N}|x|^{-N}u(x,t)\,dx$ is conserved.

Due to well-known results in the theory of the heat equation, we find
\begin{equation}\label{interm1}
\lim\limits_{t\to\infty}t^{1/2}\left|v(y,t)-M_{v_0}\frac{1}{\sqrt{4\pi
t}}e^{-y^2/4t}\right|=0,
\end{equation}
uniformly for $y\in\real$. We translate \eqref{interm1} in terms of
$u$ to get
\begin{equation}\label{asympt1exp}
\lim\limits_{t\to\infty}t^{1/2}\left|u(|x|,t)-\frac{M_{u_0}}{\omega_1}\frac{1}{\sqrt{4\pi
t}}\exp\left(-\frac{(\log|x|+(N-2)t)^2}{4t}\right)\right|=0,
\end{equation}
that is \eqref{asympt1}.
\end{proof}
We prove now the asymptotic convergence for the case when $u_0(0)=K>0$.

\begin{proof}[Proof of Theorem \ref{th.3}] (a) Let $u$ be a solution for \eqref{eq1} with initial datum satisfying $u_0(0)=K>0$, \eqref{initdata} and
\eqref{initdata1bis}. By the transformation \eqref{tr1}, we arrive to a solution $v$ of \eqref{1DHeat} whose
initial datum $v_0$ satisfies
\begin{equation}\label{interm4}
\lim\limits_{y\to\infty}v_0(y)=0, \quad
\lim\limits_{y\to-\infty}v_0(y)=K>0.
\end{equation}
Moreover, we notice that
\begin{equation}\label{interm6}
\begin{split}
\int_{-\infty}^{\infty}|KH(y)-v_0(y)|\,dy&=\int_{-\infty}^0|K-v_0(y)|\,dy+\int_0^{\infty}|v_0(y)|\,dy\\
&=\int_0^1\frac{|K-u_0(r)|}{r}\,dr+\int_1^{\infty}\frac{u_0(r)}{r}\,dr\leq\frac{1}{\omega_1}I_1<\infty,
\end{split}
\end{equation}
due to the conditions on $u_0$ in \eqref{initdata1bis}, where we have denoted by $H$ the complementary Heaviside
function
$$
H(y)=\left\{\begin{array}{ll}1, \quad \hbox{for} \ y<0,\\0, \quad
\hbox{for} \ y\geq0.\end{array}\right.
$$
We now use the following
\begin{lemma}\label{lem.Heat}
Let $v_1$, $v_2$ be two solutions of the one-dimensional heat
equation with initial data satisfying
\begin{equation}\label{Hcond}
\int_{-\infty}^{\infty}|v_2(y,0)-v_1(y,0)|\,dy<\infty.
\end{equation}
Then we have
\begin{equation}\label{interm5}
\|v_1(t)-v_2(t)\|_{\infty}=O(t^{-1/2}), \quad {\rm as} \ t\to\infty.
\end{equation}
\end{lemma}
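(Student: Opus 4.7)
The plan is to reduce the lemma to the elementary fact that the $L^\infty$ norm of the one-dimensional Gauss kernel decays like $t^{-1/2}$. Set $w(y,t) := v_1(y,t) - v_2(y,t)$. By linearity of the heat equation, $w$ solves $w_t = w_{yy}$ on $\real\times(0,\infty)$ with initial datum $w_0(y) := v_1(y,0) - v_2(y,0)$, which by hypothesis \eqref{Hcond} lies in $L^1(\real)$.

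Next, I would invoke the standard representation formula
\[
w(y,t) = \int_{-\infty}^{\infty} G(y-z,t)\, w_0(z)\, dz, \qquad G(\xi,t) = \frac{1}{\sqrt{4\pi t}}\,\exp\!\left(-\frac{\xi^2}{4t}\right).
\]
Strictly speaking, to justify this formula one needs a mild growth control on the $v_i$; but in our application the $v_i$ are obtained from bounded functions $u_i$ via the change of variables \eqref{tr1}, so boundedness of $v_i$ is inherited from \eqref{unif.bound}, and the difference $w$ satisfies the hypotheses needed for the convolution representation.

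From the representation, a pointwise estimate gives
\[
|w(y,t)| \leq \|G(\cdot,t)\|_{\infty}\, \|w_0\|_{L^1(\real)} = \frac{\|w_0\|_{L^1(\real)}}{\sqrt{4\pi t}},
\]
and taking the supremum over $y\in\real$ yields exactly
\[
\|v_1(t)-v_2(t)\|_{\infty} \leq \frac{1}{\sqrt{4\pi t}}\int_{-\infty}^{\infty}|v_1(y,0)-v_2(y,0)|\,dy = O(t^{-1/2}),
\]
which is \eqref{interm5}.

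There is no serious obstacle: the core content is the sharp $L^1$–$L^\infty$ smoothing estimate for the heat semigroup in one dimension, and the only point that requires care is the justification of the convolution representation for $w$, which follows from the boundedness of the $v_i$ coming from the universal bound \eqref{unif.bound} in the original variables. Alternatively, one could work directly with $w$ by approximating $w_0 \in L^1(\real)$ by compactly supported smooth data, for which the convolution formula is unambiguous, and passing to the limit using $L^1$ contraction of the heat semigroup.
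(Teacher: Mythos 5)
Your proof is correct and follows the same route as the paper: the paper likewise sets $w=v_2-v_1$, observes it solves the heat equation with integrable initial datum, and appeals to the standard $L^1$--$L^\infty$ decay estimate, which you simply spell out via the convolution representation. No issues.
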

The proof of Lemma \ref{lem.Heat} is immediate, since
$w(y,t):=v_2(y,t)-v_1(y,t)$ is a solution for \eqref{1DHeat} with
integrable initial datum, thus has the desired order as $t\to\infty$
by standard results for the heat equation.

We apply now Lemma \ref{lem.Heat} for the solutions $v(y,t)$ and
$K\hbox{erfc}(y,t)/2$ of \eqref{1DHeat}. Their initial data are
$v_0(y)$ and $KH(y)$ in the previous notations, thus they satisfy
the condition \eqref{Hcond}, due to \eqref{interm6} and the fact
that $I_1<\infty$. Thus, we get that
\begin{equation*}
\left\|v(t)-\frac{K}{2}\hbox{erfc}(t)\right\|_{\infty}=O(t^{-1/2}),
\quad {\rm as} \ t\to\infty,
\end{equation*}
whence, undoing the transformation \eqref{tr1} and coming back to
the initial variables, we deduce \eqref{asympt0}.

(b) Let $u$ be a solution for \eqref{eq1} with initial datum satisfying $u_0(0)=K>0$, \eqref{initdata},
\eqref{initdata1bis} and \eqref{initdata3}. By the transformation \eqref{tr1}, we arrive to a solution $v$ of
\eqref{1DHeat} with initial datum $v_0$ satisfying \eqref{interm4}. We make a further change by letting
$w(y,t):=K-v(y,t)$, hence $w$ is a solution to \eqref{1DHeat} with
$$
\lim\limits_{y\to\infty}w_0(y)=K>0, \quad
\lim\limits_{y\to-\infty}w_0(y)=0.
$$
Let $\psi(y,t):=w_{y}(y,t)=-v_{y}(y,t)$. Then again $\psi$ is a
solution for \eqref{1DHeat}, and we want to apply for $\psi$ the
following result:
\begin{lemma}\label{lem.Miller}
Let $\psi$ be a solution for Eq. \eqref{1DHeat}, whose initial datum
$\psi_0(y):=\psi(y,0)$ satisfies the following conditions:
\begin{equation}\label{moments}
M(\psi):=\int_{-\infty}^{\infty}\psi_0(y)\,dy\in(0,\infty), \quad
\varrho(\psi):=\int_{-\infty}^{\infty}|y^3\psi_0(y)|\,dy<\infty.
\end{equation}
Then, the following two results hold true:
\begin{equation}\label{asympt3}
\|\psi(y,t)-M(\psi)G(y,t)\|_{p}=O(t^{-2+1/2p}), \quad
\left\|\int_{-\infty}^y(\psi(s,t)-M(\psi)G(s,t))\,ds\right\|_{\infty}=O(t^{-3/2}).
\end{equation}
where $G(y,t)$ is the standard Gaussian
$$
G(y,t)=\frac{1}{\sqrt{4\pi t}}\exp\left(-\frac{y^2}{4t}\right).
$$
\end{lemma}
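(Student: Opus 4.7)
The proof will proceed by representing $\psi(\cdot,t)$ as the Gaussian convolution $\psi(y,t)=\int G(y-s,t)\psi_0(s)\,ds$ and then Taylor-expanding the kernel in $s$ about $s=0$ to a sufficiently high order. Writing
$$
\psi(y,t)-M(\psi)G(y,t)=\int_{-\infty}^{\infty}[G(y-s,t)-G(y,t)]\,\psi_0(s)\,ds,
$$
the plan is to use the third-order Taylor expansion
$$
G(y-s,t)-G(y,t)=-sG_y(y,t)+\frac{s^2}{2}G_{yy}(y,t)-\frac{s^3}{6}G_{yyy}(y-\theta s,t),\qquad \theta\in(0,1).
$$
A direct change of variable $y=\sqrt{t}\,z$ reveals that $\partial_y^k G(\cdot,t)$ is a rescaled Hermite function, which immediately yields the scaling $\|\partial_y^k G(\cdot,t)\|_{L^p(\real)}=C_{k,p}\,t^{-k/2-1/2+1/(2p)}$. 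Combined with $\int|s|^3|\psi_0(s)|\,ds=\varrho(\psi)<\infty$, this bounds the $L^p$ norm of the remainder by $C\varrho(\psi)\,t^{-2+1/(2p)}$, which is exactly the rate in the first line of \eqref{asympt3}.

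The first- and second-order Taylor terms must then be disposed of. Under the hypothesis \eqref{moments}, H\"older interpolation between $L^1(\real)$ and the weighted measure $|s|^3|\psi_0(s)|\,ds$ ensures that the moments $\int s\,\psi_0\,ds$ and $\int s^2\,\psi_0\,ds$ are finite, though not automatically zero. I would handle them either by absorbing the corresponding $G_y$ and $G_{yy}$ contributions into a shifted and rescaled reference Gaussian (and checking that the shift/scaling introduces only lower-order corrections), or, in the concrete application where $\psi=w_y$ with $w$ as in the proof of Theorem \ref{th.3}(b), by integrating by parts and exploiting the Heaviside-type boundary values $w_0(-\infty)=0$, $w_0(\infty)=K$, together with the decay inherited from \eqref{initdata3}. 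This identification of the first two moments is the main technical subtlety: observe that under the radial change of variable \eqref{tr1} the weighted integrand $|\log|x||^3|x|^{-(N-1)}|\nabla u_0(x)|\,dx$ appearing in \eqref{initdata3} is precisely $|y|^3|\psi_0(y)|\,dy$, which is why that hypothesis has been calibrated in this particular form.

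For the second (integrated) estimate, set $\Phi(y,t):=\int_{-\infty}^y G(s,t)\,ds$, a rescaled complementary error function. By Fubini,
$$
\int_{-\infty}^y \bigl[\psi(s,t)-M(\psi)G(s,t)\bigr]\,ds=\int_{-\infty}^{\infty}\psi_0(\sigma)\bigl[\Phi(y-\sigma,t)-\Phi(y,t)\bigr]\,d\sigma,
$$
and a third-order Taylor expansion of $\Phi(y-\sigma,t)-\Phi(y,t)$ in $\sigma$ reduces matters to the uniform bound $\|\partial_y^3\Phi(\cdot,t)\|_\infty=\|G_{yy}(\cdot,t)\|_\infty=O(t^{-3/2})$. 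The same moment cancellations as above then complete the proof. The principal obstacle is the moment identification discussed in the middle paragraph; once that is under control, everything else is a routine combination of Taylor expansion and Hermite-function scaling.
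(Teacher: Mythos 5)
The paper does not actually prove this lemma: it is quoted from \cite[Theorem 2]{MB}, with only the remark that the positivity assumption on $\psi_0$ there can be dropped. Your self-contained argument is therefore a different route by construction, and the part of it that deals with the third-moment remainder is sound: the representation $\psi(y,t)=\int G(y-s,t)\psi_0(s)\,ds$, the third-order Taylor expansion of the kernel (use the integral form of the remainder rather than the Lagrange form, since $\theta$ depends on $y$), Minkowski's integral inequality, and the scaling $\|\partial_y^kG(\cdot,t)\|_{L^p}=C_{k,p}\,t^{-k/2-1/2+1/(2p)}$ do give a bound $C\varrho(\psi)\,t^{-2+1/(2p)}$ for the term carrying $\int|s|^3|\psi_0(s)|\,ds$, and the Fubini--Taylor argument for the integrated quantity likewise yields $O(\|G_{yy}(\cdot,t)\|_\infty)=O(t^{-3/2})$ for that piece.

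The genuine gap is exactly the point you flag and then leave unresolved: the first- and second-moment terms. Your expansion leaves behind $-m_1G_y(y,t)+\tfrac{1}{2}m_2G_{yy}(y,t)$ with $m_j=\int s^j\psi_0(s)\,ds$, whose $L^p$ norms are of order $t^{-1+1/(2p)}$ and $t^{-3/2+1/(2p)}$; both are \emph{larger} than the claimed $t^{-2+1/(2p)}$, and in the integrated estimate they contribute $-m_1G(y,t)+\tfrac{1}{2}m_2G_y(y,t)$, of orders $t^{-1/2}$ and $t^{-1}$ against the claimed $t^{-3/2}$. The hypotheses \eqref{moments} do not force $m_1=m_2=0$ (a unit mass concentrated near $y=a\neq0$ satisfies \eqref{moments} yet approaches $G(y,t)$ only at rate $t^{-1+1/(2p)}$), so these terms cannot be ``disposed of''; they dominate. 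Your first remedy is the right idea but is described backwards: replacing $G(y,t)$ by $M G(y-y^{*},t+t^{*})$ with $y^{*}$, $t^{*}$ chosen to match the first and second moments introduces corrections of \emph{exactly} the orders $t^{-1+1/(2p)}$ and $t^{-3/2+1/(2p)}$ --- that is precisely how they cancel $m_1G_y$ and $m_2G_{yy}$ --- and this shifted statement is the one that \cite[Theorem 2]{MB} actually establishes; it is not the statement of the lemma, which has the unshifted $G(y,t)$. Your second remedy (integration by parts in the application $\psi_0=w_{0,y}$, using the Heaviside-type limits of $w_0$) does not make $m_1$ or $m_2$ vanish either. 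In short, your argument can be completed to prove the recentred version of the lemma, and in doing so you have correctly located a real imprecision in the statement as quoted, but as a proof of the lemma as written it does not close: the unshifted rates hold only under the additional hypothesis $m_1=m_2=0$.
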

\noindent Lemma \ref{lem.Miller} is proved in \cite[Theorem 2]{MB}.
There, it is asked that $\psi_0\geq0$, but it is easy to check that
the result holds for any $\psi$ bounded. We have to check that
$\psi$ satisfies the condition \eqref{moments}. We have:
$$
M(\psi)=\int_{-\infty}^{\infty}\psi_0(y)\,dy=\lim\limits_{y\to\infty}(w_0(y)-w_0(-y))=K\in(0,\infty),
$$
and
\begin{equation*}
\begin{split}
\varrho(\psi)&=\int_{-\infty}^{\infty}|y^3v_{y}(y,0)|=\int_0^{\infty}\left|(\log r)^3ru_r(r,0)\right|\frac{dr}{r}\\
&=\int_0^{\infty}\left|(\log
r)^3u_{0,r}(r)\right|\,dr=\frac{1}{\omega_1}\int_{\real^N}\frac{|(\log|x|)^3|}{|x|^{N-1}}|\nabla
u_0(|x|)|\,dx<\infty,
\end{split}
\end{equation*}
due to the condition \eqref{initdata3}. Thus, by Lemma
\ref{lem.Miller}, we obtain the asymptotic convergence for $\psi$ as
in \eqref{asympt3}, with $M(\psi)=K$. We keep the second result in
\eqref{asympt3} and transform it to get
\begin{equation*}
\left\|w(y,t)-K\int_{-\infty}^{y}G(s,t)\,ds\right\|_{\infty}=O(t^{-3/2}),
\end{equation*}
or equivalently, recalling that $w(y,t)=K-v(y,t)$,
\begin{equation}\label{interm2}
\left\|v(y,t)-K\int_{y}^{\infty}G(s,t)\,ds\right\|_{\infty}=O(t^{-3/2}).
\end{equation}
Noticing that
$$
\int_{y}^{\infty}G(s,t)\,ds=\frac{1}{\sqrt{4\pi
t}}\int_{y}^{\infty}e^{-s^2/4t}\,ds=\frac{2\sqrt{t}}{2\sqrt{\pi
t}}\int_{y/(2\sqrt{t})}^{\infty}e^{-z^2}\,dz=\frac{1}{2}\hbox{erfc}\left(\frac{y}{2\sqrt{t}}\right),
$$
and finally undoing transformation \eqref{tr1} to get back to the
initial variables, we arrive to \eqref{asympt2} as desired.
\end{proof}
\noindent \textbf{Remarks.} (i) \textbf{Decay rates as
$t\to\infty$.} For $x\neq0$ fixed, the behavior as $t\to\infty$ of
$E(x,t)$, $F(x,t)$ is different with respect to the dimension $N$.
Indeed, when $N\geq3$, both profiles decay to 0 as $t\to\infty$ with
a rate
$$\frac{1}{\sqrt{t}}\exp\left(-\frac{(N-2)^2}{4}t\right),$$ as it is
easy to check from the explicit formulas. Since the two special
solutions make sense also for $N=1$ and $N=2$, we analyze the decay
rate also in these cases. For $N=2$, we have
$\lim\limits_{t\to\infty}E(x,t)=1$ and $F(x,t)=O(1/\sqrt{t})$ as
$t\to\infty$, and for $N=1$ we have
$\lim\limits_{t\to\infty}E(x,t)=2$ and $F(x,t)$ behaves in the same
way as for $N\geq3$.

\medskip

\noindent (ii) \textbf{Self-map.} Consider Eq. \eqref{eq1} in dimensions $N$, $\overline{N}$ respectively. Since
both are mapped to solutions of \eqref{1DHeat} through their correspondent transformation \eqref{tr1}, we have
$$
y=\log|x|+(N-2)t=\log|\overline{x}|+(\overline{N}-2)t,
$$
whence we obtain the mapping between radially symmetric solutions in
the two dimensions
\begin{equation}\label{selfmap}
\overline{u}(\overline{x},t)=u(x,t), \quad
|x|=|\overline{x}|e^{(\overline{N}-N)t}.
\end{equation}
The transformation \eqref{selfmap} is a \emph{self-map} of Eq.
\eqref{eq1} between dimensions $N$ and $\overline{N}$.

\medskip

\noindent (iii) \textbf{Hotspots. Inner and outer regimes.} Since $F(0,t)=0$ and $F(x,t)\to0$ as $t\to\infty$,
it is natural to think about the evolution of the maximum points of $F$ at time $t$ (called \emph{hotspots}). At
this point we use the second transformation \eqref{tr2}, in order to get, in the new variables $(z,t)$,
$$
F(x,t)=\overline{F}(z,t)=\frac{1}{\sqrt{4\pi t}}e^{-\frac{(N-2)^2}{4}t}e^{z-\frac{z^2}{(N-2)^2t}}.
$$
We omit the intermediate calculations which are straightforward.
Notice that the maximum lies at $z_0=(N-2)^2t/2$, and at that point,
$F(x,t)=1/\sqrt{4\pi t}$. Hence, the hotspots evolve with a decay
rate which is smaller than the "linear" decay rate as $t\to\infty$.
This is a typical feature of the presence of two different regimes
of convergence, one for $|x|$ small (the so-called \emph{inner
regime}) and another one for $|x|$ large (the so-called \emph{outer
regime}). This phenomenon exists also in literature for the porous
medium equation in domain with holes in dimension $N\geq3$
\cite{BQV} or for the $p$-Laplacian equation in dimension $N>p$
\cite{IV}. The difference between the behavior close to $x=0$ and
close to infinity will become more clear in the case $m>1$ that will
be studied in the forthcoming paper \cite{IS12}.

\section{Asymptotic convergence for general
solutions}\label{sec.asympt2}

We are now ready to prove Theorem \ref{th.2} in its maximal
generality.
\begin{proof}[Proof of Theorem \ref{th.2}] Let $u$ be a solution of
Eq. \eqref{eq1} with initial datum $u_0$ satisfying \eqref{initdata} and \eqref{initdata1}, with $u_0(0)=K>0$.
We construct the sub- and supersolutions $u_{-}(x,t)$, $u_{+}(x,t)$ that are the (radially symmetric) solutions
of Eq. \eqref{eq1} having initial data
\begin{equation}\label{subsuper} u_{-,0}(r):=\min\{u_0(x):|x|=r\}, \quad
u_{+,0}(r):=\max\{u_0(x):|x|=r\}, \end{equation}
which are obviously continuous and bounded, and both satisfy
\eqref{initdata1}, thus also \eqref{initdata1bis}. Thus, by the comparison principle, Corollary \ref{cor.comp},
we have
$$
u_{-}(x,t)\leq u(x,t)\leq u_{+}(x,t), \quad (x,t)\in\real^N\times(0,\infty).
$$
From part (a) of Theorem \ref{th.3} we know that \eqref{asympt0} holds true for $u_{-}$ and $u_{+}$. Since
$u_{-,0}(0)=u_{+,0}(0)=K$, it follows that
$$
\left|u(x,t)-\frac{K}{2}E(x,t)\right|\leq\max\left\{\left|u_{-}(x,t)-\frac{K}{2}E(x,t)\right|,
\left|u_{+}(x,t)-\frac{K}{2}E(x,t)\right|\right\}=O(t^{-1/2}),
$$
whence $u$ satisfies \eqref{asympt0}.
\end{proof}
At this point we also prove Proposition \ref{prop.3}.
\begin{proof}[Proof of Proposition \ref{prop.3}] (a) Let $u$ be a
radially symmetric solution with $u_0=M\delta_0$. Then, by \eqref{tr1}, its transformed $v$ will be a solution
to \eqref{1DHeat}, whose initial data will be the zero function, since $x=0$ is mapped into $y\to-\infty$. But
in our class of solutions, this solution of \eqref{1DHeat} is the zero function, by standard uniqueness results
\cite[Theorem 7, p. 58]{Evans}, which is a contradiction.

\noindent (b) Assume in a first step that $u$ is radially symmetric. We map $u(r,t)$ into $v(y,t)$ by
\eqref{tr1}. Then $v_0\equiv0$ in $(-\infty,\log r_0)$. Due to standard heat equation theory \cite[Section 2.3.3
(a)]{Evans}, $v(y,t)>0$ for any $t>0$, $y\in(-\infty,\log r_0)$, but it remains true that
$\lim\limits_{y\to-\infty}v(y,t)=0$, for any $t>0$. Coming back to $u$, we reach the conclusion.

Let now $u$ be a non-radial strong solution. We define the radially
symmetric functions $u_{-}$, $u_{+}$ as in \eqref{subsuper}. Since
$u_{-}(r,t)\leq u(x,t)$, it follows that $u(x,t)>0$ for any
$x\neq0$, $t>0$. Since $u(x,t)\leq u_{+}(r,t)$, it follows that
$u(0,t)=0$, for any $t>0$.
\end{proof}

\section{Extensions, comments and open problems}

\noindent \textbf{1. Dimension $N=1$.} In this case, the
transformation \eqref{tr1} applies to any solution. We thus notice
that the effect of the singularity at $x=0$ is just disconnecting
the real line. More precisely, let $u$ be a continuous solution to
\eqref{eq1} posed in dimension $N=1$ and such that $u(0,t)=0$ for
all $t>0$. Define
$$
u_{+}(x,t):=\left\{\begin{array}{ll}u(x,t), \quad {\rm if} \
x>0,\\0, \quad {\rm if} \ x\leq0,\end{array}\right. \quad
u_{-}(x,t):=\left\{\begin{array}{ll}u(x,t), \quad {\rm if} \
x<0,\\0, \quad {\rm if} \ x\geq0,\end{array}\right.
$$
and notice that, applying the transformation \eqref{tr1} to both
$u_{+}$ and $u_{-}$, we obtain two different solutions
$$
v_{+}(y,t):=u_{+}(x,t), \ y=\log\,x-t, \ x>0, \quad {\rm and} \quad
v_{-}(z,t)=u_{-}(x,t), \ z=\log(-x)-t, \ x<0,
$$
which will asymptotically converge towards two different Gaussian
profiles with different masses $M_{+}$ and $M_{-}$ as indicated in
the proof of Theorem \ref{th.1}. Coming back to initial variables,
the profile of $u$ will be
$$
U(x,t)=\left\{\begin{array}{ll} M_{+}F(x,t), \quad {\rm if} \ x>0,\\
M_{-}F(x,t), \quad {\rm if} x<0.\end{array}\right.
$$
In particular, we give a simple explicit example of solution having
two branches of the type indicated above:
\begin{equation}\label{interm7}
F_1(x,t):=\left\{\begin{array}{ll}\a F(x,t), \quad \hbox{for} \ x\le
0,\\(1-\a)F(x,t), \quad \hbox{for} \ x>0,\end{array}\right.
\end{equation}
for any $\a\in(0,1)$, $\a\neq1/2$. We leave the details of the
proofs of the previous statements to the reader, along the lines of
the proof of Theorem \ref{th.1}. For initial data with $u_0(0)=K>0$,
Theorem \ref{th.2} still applies in this case. In order to show the
difference of behavior with respect to $N\geq3$, we plot in Figure
\ref{figure2} the two tipical profiles for dimension $N=1$.

\begin{figure}[ht!]
  \begin{center}
  \includegraphics[width=15cm,height=10cm]{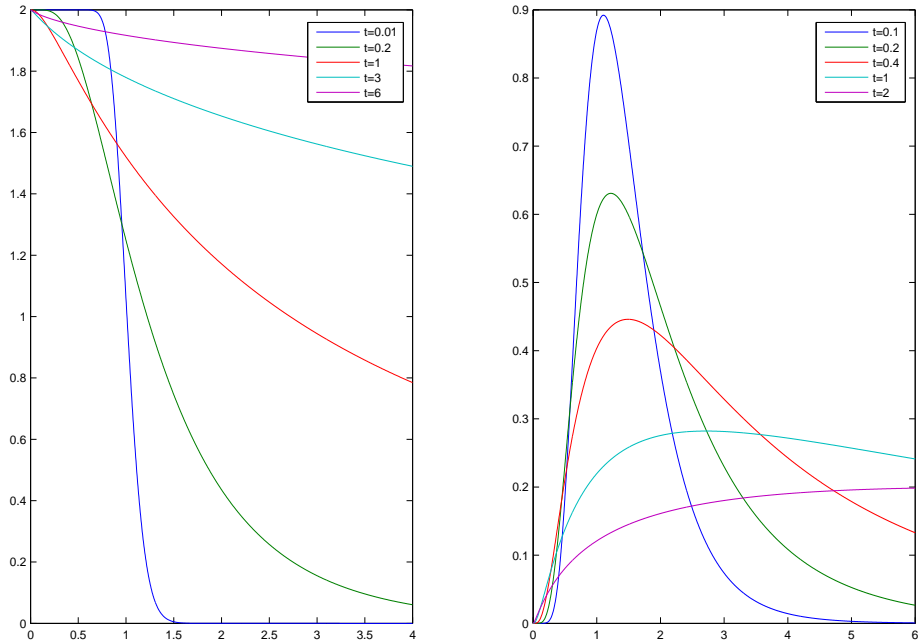}
  \end{center}
  \caption{Profiles $E$ and $F$ for dimension $N=1$ at various times.} \label{figure2}
\end{figure}

\medskip

\noindent \textbf{2. Improvements and open problems related to
Theorems \ref{th.2} and \ref{th.3}}. Besides the conditions in
Theorem \ref{th.3}, in the case $u_0(0)=K$, one can improve the rate
of convergence to the asymptotic profile $E(x,t)$ by using, for
example, results from the paper \cite{Y07}. We leave these
improvements to the reader, along the lines of the proof of Theorem
\ref{th.3}. On the other hand, an open problem arises naturally from
the results in Theorems \ref{th.2} and \ref{th.3}. That is, can one
prove \eqref{asympt0} for \textbf{general} (that means, not
necessarily radially symmetric) solutions, only asking the condition
\eqref{initdata1bis} to hold?

\medskip

\noindent \textbf{3. Asymptotic profiles of non-radially symmetric
solutions with $u_0(0)=0$.} Theorem \ref{th.1} and the
counterexamples following it raise a very natural, but also in our
opinion very difficult problem to classify the asymptotic profiles
for general solutions of Eq. \eqref{eq1} when $u_0(0)=0$. Since the
counterexamples are constructed in a very specific way, a first
partial question that could be raised is: are there all the profiles
weighted combinations of the radial one? For dimension $N=1$ we give
an answer in the first comment of this section, but the problem
remains open for the rest of dimensions.

\medskip

\noindent \textbf{4. General densities}. As we have said in the
Introduction, there exists an increasing interest for the more
general problem \eqref{eq.general}. This case has not been yet
studied in detail with $\varrho(x)\sim|x|^{-2}$ as $|x|\to\infty$
and $m=1$, but it has been strongly studied for $m>1$, \cite{RV08,
RV09, KRV10} and references therein. The authors consider there
densities $\varrho$ which are regular near $x=0$ and show that the
asymptotic behavior is given by the fundamental solution to
\eqref{eq2}. It might be interesting to raise also the problem of
considering general densities of various other forms, for example
$$
\varrho(x)\sim|x|^{-2} \ \hbox{as} \ |x|\to0, \quad
\lim\limits_{|x|\to\infty}\varrho(x)=C\in(0,\infty),
$$
that is, preserving the other property of the pure power density
(the singularity at $x=0$) and renouncing to the first one (the
behavior as $|x|\to\infty$).

\bigskip

\textsc{Acknowledgements.} R. I. partially supported by the Spanish
projects MTM2008-03176 and MTM2012-31103. A. S. partially supported
by the Spanish projects MTM2008-06326-C02-02 and MTM2011-25287. Part
of this work has been done during visits by R. I. to the Univ.
Carlos III and Univ. Rey Juan Carlos in Madrid.

\bibliographystyle{plain}

\end{document}